\documentclass{article}

\usepackage{amsmath,amsthm,amssymb, graphicx}
\usepackage{mathrsfs, stmaryrd}
\usepackage[polutonikogreek,english]{babel}
\usepackage[colorlinks,allcolors=black]{hyperref}
\usepackage{paracol}
\usepackage{tikz-cd}
\usepackage{tikz,fp}
\usepackage[all]{xy}
\xyoption{2cell}
\UseAllTwocells
\usetikzlibrary{arrows.meta}
\usetikzlibrary{matrix,arrows,decorations.pathmorphing,positioning}
\usetikzlibrary{decorations.markings,fixedpointarithmetic,patterns,arrows.meta}

\author{Georgios Chara-Lambous\thanks{\href{e-mail:\;}{math.georgios@gmail.com}}\\
\small DPMMS, University of Cambridge\\
}

\title{On the relationship between Galois and Tannakian categories: an open letter}

\makeatletter
\renewcommand\paragraph{\@startsection{paragraph}{4}{\z@}%
            {-2.5ex\@plus -1ex \@minus -.25ex}%
            {1.25ex \@plus .25ex}%
            {\normalfont\normalsize\bfseries}}
\makeatother
\setcounter{secnumdepth}{4} 
\setcounter{tocdepth}{4}

\newtheorem{definition}{Definition}[section]
\newtheorem{theorem}[definition]{Theorem}
\newtheorem{remark}[definition]{Remark}
\newtheorem{remarks}[definition]{Remarks}

\newtheorem{proposition}[definition]{Proposition}

\DeclareMathOperator{\lin}{lin}
\let\hom\relax
\DeclareMathOperator{\hom}{Hom}
\DeclareMathOperator{\stone}{\bf{Stone}}
\DeclareMathOperator{\pierce}{\bf{Pierce}}
\DeclareMathOperator{\ult}{Ult}
\DeclareMathOperator{\clopen}{Clopen}
\DeclareMathOperator{\calg}{\mathbf{CAlg}}
\DeclareMathOperator{\mon}{mon}
\DeclareMathOperator{\comon}{comon}
\DeclareMathOperator{\Aut}{\mathfrak{AUT}}

\DeclareMathOperator{\aut}{Aut}
\DeclareMathOperator{\proet}{pro\acute{e}t}
\DeclareMathOperator{\grp}{\mathbf{Grp}}
\DeclareMathOperator{\cogrp}{\mathbf{Cogrp}}

\DeclareMathOperator{\chopf}{\mathbf{CHopf}}
\DeclareMathOperator{\bool}{\mathbf{Bool}}
\DeclareMathOperator{\Rep}{\mathbf{Rep}}
\DeclareMathOperator{\Vect}{\mathbf{Vec}}
\DeclareMathOperator{\comod}{\mathbf{Comod}}
\DeclareMathOperator{\Comod}{\mathfrak{COMOD}}
\DeclareMathOperator{\stonegrp}{\mathbf{StoneGrp}}
\DeclareMathOperator{\cmon}{\mathbf{CMon}}
\DeclareMathOperator{\gal}{\mathbf{Gal}}
\DeclareMathOperator{\ob}{ob}

\DeclareMathOperator{\fd}{f.d}
\DeclareMathOperator{\fg}{f.g}
\DeclareMathOperator{\tann}{\mathbf{Tan}}
\DeclareMathOperator{\ntan}{\mathbf{NTan}}
\DeclareMathOperator{\affgrpsch}{\mathbf{AffGrpSch}}
\DeclareMathOperator{\affgrp}{\mathbf{AffGrp}}
\DeclareMathOperator{\alggrp}{\mathbf{AlgGrp}}
\DeclareMathOperator{\alggrpsch}{\mathbf{AlgGrpSch}}
\DeclareMathOperator{\etgrp}{\mathbf{\acute{E}tGrp}}
\DeclareMathOperator{\etgrpsch}{\mathbf{\acute{E}tGrpSch}}

\DeclareMathOperator{\sch}{\mathbf{Sch}}

\DeclareMathOperator{\fet}{\mathbf{F\acute{E}t}}
\DeclareMathOperator{\idemp}{Idemp}
\DeclareMathOperator{\ta}{Tan}
\DeclareMathOperator{\disc}{disc}
\DeclareMathOperator{\ga}{Gal}

\DeclareMathOperator{\cont}{Cont}
\DeclareMathOperator{\End}{End}

\DeclareMathOperator{\Cont}{\mathfrak{CONT}}
\DeclareMathOperator{\pro}{\mathbf{Pro}}
\DeclareMathOperator{\ind}{\mathbf{Ind}}

\DeclareMathOperator{\set}{\mathbf{Set}}
\DeclareMathOperator{\cring}{\mathbf{CRing}}
\DeclareMathOperator{\aff}{\mathbf{Aff}}

\DeclareMathOperator{\Mod}{\mathbf{Mod}}
\DeclareMathOperator{\Proj}{\mathbf{Proj}}

\DeclareMathOperator{\vv}{\mathcal{V}}

\DeclareMathOperator{\kernel}{ker}

\DeclareMathOperator{\id}{id}
\DeclareMathOperator{\et}{\acute{e}t}
\DeclareMathOperator{\spec}{Spec}

\DeclareMathOperator{\op}{op}
\DeclareMathOperator{\csep}{CSep}
\DeclareMathOperator{\fsep}{\mathbf{FCSep}}
\DeclareMathOperator{\ff}{\mathbb{F}}
\renewcommand{\c}{\mathcal{C}}

\renewcommand{\tt}{\mathbb{T}}

\begin{document}

\maketitle

{\centering\footnotesize To my brother \textgreek{Qr`istos} and his wife \textgreek{Mar`ia} on the occasion of their wedding.\par}

\begin{abstract}
It has long been said that the theories of Galois and Tannakian categories over a field $k$ are just ``formally similar''. With this note I will argue that this is in fact not the case: not only do Tannakian categories generalize Galois categories, but in fact the latter sit inside the former in a rather structured way. In particular, in the first part, I will claim that at the core of the general connection between $k$-Tannakian and Galois categories lies a $2$-functorial machine $$\tt\mapsto \Pi(\tt),$$  which takes as input any $k$-Tannakian category $\tt$ and associates to it a $k$-Tannakian category $\Pi(\tt)$ which, when $k$ is a separably closed field, determines and is determined by a Galois category $$\csep(\Pi(\tt)),$$ where the operation $\csep(-)$ is defined using the notion of \emph{commutative separable algebra in a symmetric monoidal category}, that Aurelio Carboni defines and uses in the beautiful paper \cite{carboni} to construct a Boolean pretopos out of a compact closed additive category with equalizers. 
\par This machine behaves, in the general case where no fibre functor is chosen on the categories, as a ``$2$-dimensional mono-coreflector'' when $k$ is separably closed. In the second part of this work I will demonstrate this in the simpler case where the categories are neutralized, and in which the results of \cite{carboni} are readily available, by arguing that the ($2$-) category $\gal_{*}$ of neutralized Galois categories is ($2$-) coreflective 
\[\begin{tikzcd}[row sep=large]
\ntan_{k,*}\arrow[d,shift left=1ex, "\gamma"]\\
\gal_{*}\arrow[u,hook,shift left=1.4ex, "\dashv"']
\end{tikzcd}
\]
in the ($2$-) category $\ntan_{k,*}$ of neutralized $k$-Tannakian categories. 
\end{abstract}

\newpage

\section*{Confiteor}
I would like to apologize to everyone who has reached out to me enquiring with interest about this (many years old) work, aspects of which I have presented both in Cambridge and at the PSSL 101 (Leeds) in 2017. I am extremely fascinated by the Tannakian formalism and the current raison d'   \^{e}tre of my academic life is to explore how widely applicable it is - as I strongly believe that the formalism's underlying mechanics are useful much more broadly than it has been recognized. While working on this more ``fundamental'' project, the work presented here was put aside, mainly because the ``complete demystification" of the connection between Galois and Tannakian categories ended up being a task bigger than I initially anticipated. And even though I had na\"{i}vely hoped to explore every aspect of it alone before sharing my thoughts, the emotional weight that this attitude carries has now become unbearable; and I am thus opening Pandora's box, in the hope that I will start making more material public. As a compromise, even though this article does present the main idea behind the connection, I will treat it as a ``working'' paper: I am planning to revisit it with additions and more details, as the state of it is by no means complete. 
\par The worlds of Tannakian and Galois categories are beautiful gardens bearing fruits from many different areas of mathematics, and I have tried to demonstrate this by using rather diverse language here - the first part is written with Algebraic Geometers in mind while the second is written with Category Theorists in mind. In fact I see the exploration of the connection between Galois and Tannakian categories as a great cooperation opportunity between these communities: in my opinion insight from both sides is necessary in order to harvest the full potential of this relationship. I welcome any questions, suggestions and feedback on this work, but most importantly (the reason for deciding to publish this in a working paper state) I would like to invite anyone who has thoughts and insights, either from a geometric or a categorical perspective, to write about them, as I really believe there are aspects that I have missed but also fruitful directions that deserve to be developed independently. Studying this connection seriously is a few decades overdue, and this has to be a collaborative effort.

\section*{Acknowledgments}
Firstly I would like to thank Martin Hyland for all his support: for accepting my suggestion to study the connection between Galois and Tannakian categories as a research project in the first place, for allowing me complete freedom in my work, for not losing hope in me regardless of my secretive attitude, and for his numerous attempts at convincing me to write my thoughts down in detail. I would also like to thank Pierre Deligne who, through his works and our correspondence, has taught me how to use the Tannakian formalism in a ``working'' way - the interpretation of my adjunction using ``Tannakian Algebraic Geometry'' is a result of his guidance. Finally, I would like to thank Peter Johnstone for his interest and encouragement for this work in the early stages. 

\newpage

\part{}
In this first part I will use the language of gerbes and Deligne's theory of ``Algebraic Geometry internal in a Tannakian category'' to describe the connection between Tannakian and Galois categories ``both internally and externally'' - what this means will be made clear in the body of the text.
\section{The general picture}
\subsection{The $1$-dimensional situation}
Let me start by discussing the $1$-dimensional situation that the aforementioned procedure ``categorifies''. Let $k$ be a field and consider the inclusion 
\[\begin{tikzcd}[row sep=large]
\alggrpsch_k\\
\etgrpsch_k\arrow[u,hook, "\iota"]
\end{tikzcd}
\]
of the full subcategory of \'{e}tale (algebraic) group $k$-schemes in the category of (affine) algebraic group $k$-schemes. Every algebraic group $k$-scheme $G$ induces the short exact sequence of algebraic group $k$-schemes
\[
\begin{tikzpicture}
\node (A) at (-0.8,0) {$0$};
\node (B) at (1.5,0) {$G^0:=\ker q_G$};
\node (C) at (3.8,0) {$G$};
\node (D) at (5.8,0) {$\pi_0(G)$};
\node (E) at (7.6,0) {$0$};

\draw[->] (A) to (B);
\draw[->] (B) to (C);
\draw[->] (C) to (D);
\draw[->] (C) to node (g) {} node[label=above:$\scriptstyle q_G$] {} (D);
\draw[->] (D) to (E);
\end{tikzpicture}
\]
where
\begin{itemize}
\item the (finite) \'{e}tale group $k$-scheme $\pi_0(G)$ \emph{of connected components} is defined as $$\pi_0(G):=\spec L,$$ with $L$ the largest separable $k$-subalgebra of the finitely generated commutative Hopf $k$-algebra $\mathcal{O}(G)$, and with the map $q_G$, induced by the inclusion $$L\subseteq\mathcal{O}(G),$$ being a surjective map of (affine) group $k$-schemes courtesy of the fact that $\mathcal{O}(G)$ is faithfully flat over $L$,
\item and the \emph{connected component} or \emph{identity component} algebraic group $k$-scheme $G^0$, defined as the kernel of $q_G$, is connected.
\end{itemize}
\par This construction of $\pi_0(G)$ is functorial, and the induced functor $\pi_0$ is a left adjoint to the inclusion $\iota$, with every $q_G$ being an epimorphism (a surjective map of affine group $k$-schemes is easily seen to be an epimorphism in $\alggrpsch_k$), realizing $\etgrpsch_k$ as a full, epi-reflective subcategory 
\[\begin{tikzcd}[row sep=large]
\alggrpsch_k\arrow[d,shift left=1.4ex, "\pi_0"]\\
\etgrpsch_k\arrow[u,hook,shift left=1.4ex, "\iota", "\vdash"']
\end{tikzcd}
\]
of $\alggrpsch_k$. 
\par The next observation is that this reflection $\pro$-completes: indeed, every affine group $k$-scheme $G$ can be written as a cofiltered limit $$G\cong\varprojlim_i G_i$$ of algebraic $k$-group schemes $G_i$, with each $G_i=\spec \mathcal{O}(G)_i$ corresponding to the finitely generated commutative Hopf $k$-subalgebras $\mathcal{O}(G)_i\subseteq \mathcal{O}(G)$ realizing $$\mathcal{O}(G)_i\cong\varinjlim_i \mathcal{O}(G)_i,$$ as the filtered union of the $\mathcal{O}(G)_i$. In fact, $$\chopf_k\simeq\ind\chopf_{k,\fg}$$ and $$\affgrpsch_k\simeq \pro\alggrpsch_k.$$ 
Taking the group of connected components of every $G_i$ induces a canonical morphism $$q_G:G\cong\varprojlim_i G_i\to\varprojlim_i \pi_0(G_i)=:\pi_0(G),$$ to a pro- (finite) \'{e}tale group $k$-scheme whose kernel identifies with $$\ker q_G\cong \varprojlim_i G_i^0,$$ and it is connected. 
\begin{remark}
Notice that I am abusively using the notation $\pi_0(G)$ for what really is $\pro\pi_0 (G)$; similar abuse for the quotient map notation, but I think the context makes everything clear. 
\end{remark}
This procedure of ``taking the maximal pro-\'{e}tale quotient of an affine $k$-group scheme'' induces an epi-reflection 
\[\begin{tikzcd}[row sep=large]
\affgrpsch_k\arrow[d,shift left=1.4ex, "\pi_0"]\\
\affgrpsch_k^{\proet}\arrow[u,hook,shift left=1.4ex, "\iota", "\vdash"']
\end{tikzcd}
\]
where $\iota$ is now the inclusion in $\affgrpsch_k$ of the full subcategory $\affgrpsch_k^{\proet}$ consisting of those affine $k$-group schemes that can be written as cofiltered limits of \'{e}tale group $k$-schemes.
\begin{remark}[``Internal'' Recovery]
It is a well-known fact that the counit of a reflection is a natural isomorphism. In this way we can ``recover'' every pro-\'{e}tale group $k$-scheme $E$ through a canonical isomorphism $\epsilon_E:\pi_0\circ\iota (E)\cong E$ induced by the counit of the adjunction $\pi_0\dashv \iota$. Notice that the ``reconstruction'' happens ``internally'', or ``within'' the category of affine group $k$-schemes.
\end{remark}
\par Let's see now how the situation ``externalizes'' by picking a separable closure of $k$: let $\overline{k}$ be one such separable closure of $k$. As it can be gathered from the upcoming Section 3.2, the assignment $$G\mapsto G(\overline{k}),$$ sending a finite \'{e}tale $k$-scheme (or \emph{finite \'{e}tale cover}, in the terminology of Section 3.2) to its (``rational'') $\overline{k}$-points induces an equivalence $$\fet_k\simeq \cont_f(\gal(\overline{k}/k)),$$ between the category of finite \'{e}tale $k$-schemes and the category of discrete finite sets endowed with a continuous action of the profinite (Stone) absolute Galois group of $k$. Choosing now $k$ to be separably closed kills the absolute Galois group, $\cont_f(\gal(\overline{k}/k))$ identifies with the category $\set_f$ of finite sets and by passing to the group objects we end up with an equivalence 
\begin{eqnarray*}
\etgrpsch_k&\to&\grp_f\\
G&\mapsto& G(k).
\end{eqnarray*}
By composing with $\pi_0$, we get a functor
\begin{eqnarray*}
\alggrpsch_k&\to&\grp_f\\
G&\mapsto& \pi_0(-)_k:=\pi_0(-)(k).
\end{eqnarray*}
which is left adjoint to the fully faithful functor 
\begin{eqnarray*}
\grp_f&\to&\alggrpsch_k\\
G&\mapsto& k^G,
\end{eqnarray*}
where $k^G$ is the coproduct of $|G|$-many copies of $\spec k$; i.e. we have a reflection 
\[\begin{tikzcd}[row sep=large]
\alggrpsch_k\arrow[d,shift left=1.4ex, "\pi_0(-)_k"]\\
\grp_f\arrow[u,hook,shift left=1.4ex, "k^{(-)}", "\vdash"']
\end{tikzcd}
\]
which $\pro$-completes to give an adjunction (the ``external'' incarnation of the reflection $\pi_0\dashv \iota$ of $\affgrpsch_k^{\proet}$ in $\affgrpsch_k$ for $k$ separably closed), 
\[\begin{tikzcd}[row sep=large]
\affgrpsch_k\arrow[d,shift left=1.4ex, "\pro \pi_0(-)_k"]\\
\stonegrp\arrow[u,hook,shift left=1.4ex, "I", "\vdash"']
\end{tikzcd}
\]
where I have replaced (as a harbinger to a connection with the Pierce spectrum I will mention later) the expected functor $\pro k^{(-)}$ with the functor $I$ defined as follows,
\begin{eqnarray*}
I:\stonegrp&\to&\affgrpsch_k\\
\pi&\mapsto& \spec \cont(\pi,k_{\disc}),
\end{eqnarray*}
with $\cont(\pi,k_{\disc})$ being the commutative Hopf $k$-algebra of continuous functions from $\pi$ to the field $k$ endowed with the discrete topology.
\begin{remark}[``External'' Recovery]\label{recext}
We have a canonical isomorphism $\epsilon_\pi:\pi_0\circ I(\pi)\cong \pi$ for every Stone group $\pi$.
\end{remark}

\subsection{The $2$-dimensional situation}
Now let's look at general Tannakian categories over a field $k$ and Galois categories collectively. Write $\tann_k$ and $\gal$ for their $2$-categories respectively.
\begin{remark}
Notice that both Tannakian and Galois categories are considered in this section without any particular fibre functors attached.
\end{remark}
\par I would like the reader to think of the situation here as similar in spirit to its $1$-dimensional counterpart discussed in the previous subsection. There, I argued that for $k$ separably closed, one can associate to a Stone group $\pi$ an affine (pro-\'{e}tale) group $k$-scheme $I(\pi)$ and from it one can go back to (``recover'') $\pi$ canonically (cf. Remark \ref{recext}). The point is that the same holds, in an appropriately $2$-dimensional way, for Galois and Tannakian categories: for any field $k$, one can associate, uniquely, a Galois category $\c$ (without any chosen fibre functor) a (neutral) Tannakian category $\c_{\lin}$, and when $k$ is separably closed, one can in fact recover $\c$ from $\c_{\lin}$. In this case, $\c$ and $\c_{\lin}$ have ``the same'' fibre functors.  
\begin{remark}
Let me elaborate on the obstruction (without going into details in its cohomological nature due to the fact that the space of fibre functors is a space of torsors) to doing this for a general $k$. Firstly notice that for every two fibre functors $p_1,p_2:\c\to\set_f$ on $\c$, we have $$\pi_1^{\ga}(\c,p_1)\cong\pi_1^{\ga}(\c,p_2).$$ However, if $k$ is not assumed to be separably closed, it is possible that $\c_{\lin}$ admits fibre functors $\omega_1,\omega_2:\c_{\lin}\to\Vect_k^{\fd}$ on $\c_{\lin}$ with values in $k$, for which $$\pi_1^{\ta}(\c_{\lin},\omega_1)\ncong\pi_1^{\ta}(\c_{\lin},\omega_2).$$ In particular it is possible that for one such fibre functor $\omega$, no Stone group $\pi\cong\varprojlim_i G_i$ exists with $$\pi_1^{\ta}(\c_{\lin},\omega)\cong \varprojlim_i k^{G_i},$$ i.e. $\c_{\lin}$ can admit fibre functors that are not recognizable by affine group $k$-schemes that come from Stone groups, which is, let me stress again, contrary to what happens for $k$ separably closed.
\end{remark}
Now let me describe how the (co-\footnote{Notice that the $2$-equivalence between Tannakian $k$-categories and affine fpqc gerbes over $\spec k$ is \emph{contravariant}, so it is \emph{co-} reflective behaviour that should be expected on this side of the theory.}) reflection is actually constructed - there are, again, internal and external ways to look at this. To understand the effect of the $2$-coreflection internally - again I use this phrase just to signify that everything happens within the $2$-category $\tann_k$ - let $\tt$ be a Tannakian category over $k$ and write $\pi(\tt)$ for the fundamental group of $\tt$, as constructed in Section 8 of \cite{deligne}; it is an affine group $\tt$-scheme, $$\pi(\tt)\in\affgrp_{\tt}:=\grp(\cmon(\ind\tt)^{\op})$$ which acts on every object of $\tt$. Now notice that the construction of the $\pi_0(G)$ of an affine group $k$-scheme $G$ is compatible with base change, and thus we can form the internal incarnation $$\pi_0(\pi(\tt))$$ of $\pi_0$ (see Exemple 8.14 of \cite{deligne}), by performing the construction of the previous subsection on the ``realized'' affine group $k$-schemes $$\pi_0(\pi(\tt))_{\omega}$$ for every fibre functor $\omega$. This procedure induces a faithfully flat morphism $$q_{\pi(\tt)}:\pi(\tt)\to \pi_0(\pi(\tt))$$ in $\affgrp_{\tt}$. The kernel $$\pi(\tt)^0:=\ker q_{\pi(\tt)}$$ is a normal affine group sub-$\tt$-scheme of $\pi(\tt)$, and in fact we get a short exact sequence of affine group $\tt$-schemes
\[
\begin{tikzpicture}
\node (A) at (-0.8,0) {$0$};
\node (B) at (1.5,0) {$\pi(\tt)^0$};
\node (C) at (3.8,0) {$\pi(\tt)$};
\node (D) at (5.8,0) {$\pi_0(\pi(\tt))$};
\node (E) at (7.6,0) {$0$};

\draw[->] (A) to (B);
\draw[->] (B) to (C);
\draw[->] (C) to (D);
\draw[->] (C) to node (g) {} node[label=above:$\scriptstyle q_{\pi(\tt)}$] {} (D);
\draw[->] (D) to (E);
\end{tikzpicture}
\] 
The normal group sub-$\tt$-scheme $\pi(\tt)_0$ of $\pi(\tt)$ induces a Tannakian subcategory $$\Pi(\tt)\subseteq\tt$$ generated by the objects of $\tt$ on which the connected component $\pi(\tt)^0$ acts trivially (cf. \cite{deligne89}, 6.6), which, from the gerbe perspective, corresponds to an fpqc gerbe whose band is a profinite group $k$-scheme. 
\par Assuming $k$ to be separably closed, i.e. entering the realm in which general Galois categories become co-reflective in Tannakian categories, the r\^{o}le of the ``internal'' $1$-dimensional reflector $\pi_0$ is played at the $2$-dimensional level by the (co-) reflective association $$\tt\mapsto \Pi(\tt).$$ In this case, the band of the Tannakian gerbe of $\Pi(\tt)$ corresponds to a classical profinite (Stone) group, and the gerbe becomes a gerbe with a Stone band, which in turn corresponds to a Galois category $\mathbf{Gal}_{\tt}$. The r\^{o}le of the external $1$-dimensional reflector $\pi_0(k)$ is played, at the $2$-dimensional level, by Carboni's construction: the Galois category $\mathbf{Gal}_{\tt}$ can be obtained from the Tannakian category $\tt$ as the Boolean pretopos that Carboni proves to be associable to any compact closed additive category with equalizer, by taking \emph{commutative separable algebras} in $\tt$.

\part{}
In Part II that follows I elaborate further on the idea discussed above, by demonstrating how the (co-) reflection works, in more detail, in the case where neutralized versions of Tannakian and Galois categories are considered. Notice that this is morally unacceptable, as interesting Tannakian categories, like those of motives, come with many available ``realization'' fibre functors, and choosing one is criminal; however doing so, as I will explain below, essentially eliminates the effects of $2$-dimensional category theory, allowing me to pass the general idea in a simpler categorical environment. I give all the definitions and results in detail.

\section{Galois categories}
\subsection{Definitions}
The main reference for Galois categories is \cite{sga1}. The definition that follows is not Grothendieck's original axiomatic characterization, but it is equivalent to it; it consists of a more compact set of axioms than those appearing in \cite{sga1}.
\begin{definition}[Galois category]\label{definitiongal}
A \emph{Galois category} consists of a pair $(\c,F)$, where $\c$ is an (essentially) small category which is 
\begin{itemize}
\item[(GAL1)] finitely complete,
\item[(GAL2)] finitely cocomplete,
\item[(GAL3)] and every object is a finite coproduct of connected objects.
\end{itemize}
for which there exists a functor
\[
  \begin{tikzpicture}
  \node (A) at (0,0) {$\c$};
  \node (B) at (0,-1.5) {$\set_f$};
 \draw[->] (A) to node[label=left:$\scriptstyle p$]{} (B);
\end{tikzpicture}
\]
to the category $\set_f$ of finite sets, called a \emph{fibre functor on $\c$}, which
\begin{itemize}
\item[(GAL4)] is exact, and
\item[(GAL5)] is conservative (i.e. reflects isomorphisms: if $f$ is a morphism in $\c$ such that $p(f)$ is an isomorphism, then $f$ is an isomorphism).
\end{itemize}
I will say \emph{$\c$ is a Galois category} if I do not make a particular choice of a fibre functor on it\footnote{Notice however, that any two fibre functors on a Galois category $\c$ are (non-canonically) isomorphic.}, while a pair $(\c,p)$ consisting of a Galois category $\c$ and a chosen fibre functor $p$ will be called a \emph{neutralized Galois category}. 
\end{definition}
\begin{remark}
The definition of a Galois category is equivalent to requiring a Boolean pretopos admitting a point (consequently a Boolean topos). See \cite{johnstone} for this formulation of the theory, a perspective on Galois categories which will be necessary later when discussing Carboni's work.
\end{remark}
In this work I will be studying (neutralized) Galois categories collectively so let's see what a $1$-morphism between them should be. 
\begin{definition}[$1$-morphisms of Galois categories]
Let $\c$ and $\c'$ be Galois categories. 
\begin{itemize}
\item A \emph{$1$-morphism of Galois categories from $\c$ to $\c'$} is a functor $F:\c\to\c'$ which is exact.
\item Given fibre functors $p, p'$ neutralizing $\c$ and $\c'$ respectively, a \emph{$1$-morphism of neutralized Galois categories $(\c,p)\to(\c',p')$} is a pair $(F,\alpha)$ consisting of a morphism $F:\c\to\c'$ of Galois categories and a chosen natural isomorphism $\alpha:p\cong p'\circ F$ as follows
\[
\begin{tikzpicture}[scale=2]
\node (S) at (0,0) {$\set_f$};
\node (C) at (-1,1) {$\c$};
\node (C') at (1,1) {$\c'$};

\draw[->] (C) to node (f) {} node[pos=0.5,label=left:$\scriptstyle p$] {} (S);
\draw[->] (C) to node (g) {} node[pos=0.5,label=above:$\scriptstyle F$] {} (C');
\draw[->] (C') to node (f') {} node[pos=0.5,label=right:$\scriptstyle p'$] {} (S);
\draw[shorten >=0.5cm,shorten <=0.7cm,double,double distance = 0.05cm,->] (C') -- node[above]{$\scriptstyle \alpha$} node[below]{$\scriptstyle \cong$} (f);
\end{tikzpicture}
\] 
\end{itemize}
\end{definition}
Next let's see what $2$-morphisms are.
\begin{definition}[$2$-Morphisms of neutralized Galois categories]
Let $(\c,p)$ and $(\c',p')$ be neutralized Galois categories. A \emph{$2$-morphism of neutralized Galois categories} from $(F,\alpha)$ to $(F',\alpha')$ is a natural transformation $\tau:F\Rightarrow F'$ such that we have an equality $$\alpha'\cdot \tau=\alpha$$ of natural transformations, where $\alpha'\cdot \tau$ is the vertical composition $$\alpha'\cdot \tau:=\alpha'\cdot(\iota_{p'}\underset{2}{\circ} \tau):p'\circ F\Rightarrow p:\c\to \set_f,$$ where $\iota_{p'}\underset{2}{\circ} \tau$ is the whiskering 
\[
\begin{tikzpicture}[scale=1.5]
\node (A) at (0,0) {$\c$};
\node (B) at (2,0) {$\c'$};
\node (C) at (4,0) {$\set_f$};
\node (A') at (5,0) {$\c$};
\node (C') at (7,0) {$\set_f$};

\draw[->] (A) to[bend left=50] node (f) {} node[pos=0.7,label=above:$\scriptstyle F$] {} (B);
\draw[->] (A) to[bend right=50] node (g) {} node[pos=0.7,label=below:$\scriptstyle F'$] {} (B);
\draw[->] (B) to[bend left=50] node (f') {} node[pos=0.7,label=above:$\scriptstyle p'$] {} (C);
\draw[->] (B) to[bend right=50] node (g') {} node[pos=0.7,label=below:$\scriptstyle p'$] {} (C);
\draw[->] (A') to[bend left=50] node (f'') {} node[pos=0.7,label=above:$\scriptstyle p'\circ F$] {} (C');
\draw[->] (A') to[bend right=50] node (g'') {} node[pos=0.7,label=below:$\scriptstyle p'\circ F'$] {} (C');

\draw[->,decorate,decoration={snake,post length=0.9mm}] (C) to (A');

\draw[double,double distance = 0.05cm,->] (f) -- node[right]{$\scriptstyle \tau$} (g);
\draw[double,double distance = 0.05cm,->] (f') -- node[right]{$\scriptstyle  \iota_{B'}$} (g');
\draw[double,double distance = 0.05cm,->] (f'') -- node[right]{$\scriptstyle \iota_{B'}\underset{2}{\circ} \tau$} (g'');
\end{tikzpicture}
\]
with $\underset{2}{\circ}$ denoting the horizontal composition operation of natural transformations and $\iota_{p'}$ is the identity natural transformation on the functor $p'$.
\end{definition}
\begin{proposition}[The $2$-category of neutralized Galois categories]
Neutralized Galois categories, $1$-morphisms between them and $2$-morphisms between those, as defined above, assemble into a $2$-category denoted by $\gal_{*}$. 
\end{proposition}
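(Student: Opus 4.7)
The plan is to verify the 2-category axioms by inheriting the 2-categorical structure from $\cat$ and checking that the extra coherence data carried by 1- and 2-morphisms of neutralized Galois categories are closed under the relevant compositions. The underlying 2-category of categories, functors and natural transformations does all of the heavy lifting; there is really only one nontrivial compatibility to verify.

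First, for 1-morphism composition, given $(F,\alpha):(\c,p)\to(\c',p')$ and $(G,\beta):(\c',p')\to(\c'',p'')$, I would set
\[(G,\beta)\circ(F,\alpha):=\bigl(G\circ F,\;(\beta\underset{2}{\circ}\iota_F)\cdot\alpha\bigr),\]
which is a 1-morphism of neutralized Galois categories because $G\circ F$ is exact as a composite of exact functors and the pasted natural transformation from $p$ to $p''\circ G\circ F$ is invertible as a vertical composite of natural isomorphisms. The identity on $(\c,p)$ is taken to be $(\id_\c,\iota_p)$. Vertical composition of 2-morphisms, as well as their identities, is inherited directly from $\cat$; closure requires checking that $\alpha''\cdot(\iota_{p'}\underset{2}{\circ}(\tau'\cdot\tau))=\alpha$ whenever $\tau:(F,\alpha)\Rightarrow(F',\alpha')$ and $\tau':(F',\alpha')\Rightarrow(F'',\alpha'')$ are 2-morphisms, which follows immediately from functoriality of whiskering, $\iota_{p'}\underset{2}{\circ}(\tau'\cdot\tau)=(\iota_{p'}\underset{2}{\circ}\tau')\cdot(\iota_{p'}\underset{2}{\circ}\tau)$, followed by two applications of the defining equation for 2-morphisms.

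The main obstacle, as expected, is horizontal composition of 2-morphisms. Given $\tau:F\Rightarrow F'$ and $\sigma:G\Rightarrow G'$ each satisfying their individual coherence condition, I would take the horizontal composite $\sigma\underset{2}{\circ}\tau:G\circ F\Rightarrow G'\circ F'$ in $\cat$ and show it is a 2-morphism between the composite 1-morphisms $(G,\beta)\circ(F,\alpha)$ and $(G',\beta')\circ(F',\alpha')$. Concretely, I must verify that pasting the codomain natural iso $(\beta'\underset{2}{\circ}\iota_{F'})\cdot\alpha'$ after the whiskered composite $\iota_{p''}\underset{2}{\circ}(\sigma\underset{2}{\circ}\tau)$ reproduces the domain iso $(\beta\underset{2}{\circ}\iota_F)\cdot\alpha$. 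I would split this pasting in half using the middle-four interchange law, apply the compatibility of $\tau$ with $\alpha,\alpha'$ on the inside and the compatibility of $\sigma$ with $\beta,\beta'$ on the outside, each after whiskering by the appropriate identity natural transformation, and conclude. Once this is settled, the remaining 2-category axioms — associativity of 1-composition, the two unit laws, and the middle-four interchange — are witnessed at the level of underlying functors and natural transformations, where they already hold in $\cat$, so they are inherited for free; no property of Galois categories beyond closure of exact functors under composition is used.
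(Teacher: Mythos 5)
Your verification is correct: the pasting formula for the coherence isomorphisms of composite $1$-morphisms, the closure of vertical composites via $\iota_{p'}\underset{2}{\circ}(\tau'\cdot\tau)=(\iota_{p'}\underset{2}{\circ}\tau')\cdot(\iota_{p'}\underset{2}{\circ}\tau)$, and the interchange argument splitting the horizontal composite into the $\tau$-part and the $\sigma$-part are exactly the checks required, and you rightly note that only closure of exact functors under composition is specific to Galois categories. The paper gives no proof of this proposition at all -- it is asserted as routine and the text moves directly to the finer (and there actually proved) claim that $\gal_{*}$ is equivalent to a $1$-category -- so your write-up simply supplies the omitted standard verification.
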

We must immediately notice that this $2$-category is not exactly a $2$-category: it is, rather a ``setoid-enriched category'', i.e. $\gal_{*}$ is naturally $2$-equivalent to a $1$-category: one can check that $2$-morphisms are isomorphisms and automorphisms of $1$-morphisms are the identity. Indeed, notice that on components, for every $C\in \ob\c$, we have by the Godement rule that 
\begin{eqnarray}\label{eqq}
\alpha_C=(\alpha'\cdot \tau)_C=\alpha'_C\circ(\iota_{p'}\underset{2}{\circ} \tau)_C=\alpha'_C\circ \id_{pF'C}\circ p'(\tau_C)=\alpha'_C\circ p'(\tau_C),
\end{eqnarray}
implying that, since both $\alpha_C$ and $\alpha'_C$ are isomorphisms, $p'(\tau_C)$ must be an isomorphism too. But it then immediately follows from requirement $\text{(GAL5)}$ that $\tau_C$ is also an isomorphism for every $C\in \ob\c$, so $2$-morphisms are indeed isomorphisms and we are half-way there. We need to also show that automorphisms of $1$-morphisms are the identity $2$-morphism. But this follows easily by noticing that $(\ref{eqq})$ becomes in this case $$\alpha_C=\alpha_C\circ p'(\tau_C),$$ which in turn implies $$p'(\tau_C)=\id_{p'C}$$ since $\alpha_C$ is an isomorphism. It then suffices to notice that a fibre functor is faithful and the identity already gives $p'(\id_C)=\id_{p'C}$. Thus $\tau_C$ must indeed be the identity itself.
\begin{remark}
In this way $\gal_{*}$ becomes equivalent to a $1$-category, and I will be treating it as such in the sequel - hence the prefix ``$2$-'' in parentheses in front of every instance of referring to neutralized categories collectively. Notice that this would by far \emph{not} be the case if we were assembling general Galois categories into a $2$-category, where we do not have a ``canonical'' choice of fibre functor: it is precisely the \emph{fixing} of a fibre functor that allows us to treat these categories collectively as a $1$-category. 
\end{remark}
\subsection{Classification of neutralized Galois categories}
For a profinite group $\pi$, denote by $$\cont_f(\pi)$$ the category of discrete finite sets endowed with a continuous (left) action of $\pi$. It is a Galois category admitting the forgetful functor 
\[
\begin{tikzpicture}
\node (A) at (0,0) {$\set_f$};
\node (B) at (0,1.5) {$\cont_f(\pi)$};

\draw[->] (B) to node (f) {} node[label=left:$\scriptstyle U$] {} (A);
\end{tikzpicture}
\]
to the category of finite sets as a fibre functor. It turns out that all (neutralized) Galois categories are of this form. To see this we need the following definition.
\begin{definition}[The based fundamental group of a Galois category]
Let $\c$ be a Galois category and let $p$ be a fibre functor. The \emph{(Galois) fundamental group of $\c$ pointed at $p$} is the group $$\pi_1^{\ga}(\c,p):=\aut(p)$$ of automorphisms of the fibre functor $p$.
\end{definition}
This fundamental group is a profinite Stone group when topologized as a closed subgroup $$\pi_1^{\ga}(\c,p)\subseteq\prod_{C\in\ob\c} \aut(FC),$$ of the product group above, which is itself a profinite group when endowed with the product topology (each of the finite groups $\aut(FC)$ is given the discrete topology), as an arbitrary product of profinite groups is profinite and discrete finite groups are profinite. 
\par Grothendieck's work on Galois categories has the following two equally important aspects:
\begin{itemize}
\item \textbf{Reconstruction:} For any profinite group we have $\pi\cong\pi_1^{\ga}(\cont_f(\pi),U)$.
\item \textbf{Recognition:} A neutralized Galois category $(\c,p)$ factorizes through an equivalence
\[\begin{tikzcd}[row sep=large, column sep=large]
\c \arrow[r, "\simeq"]\arrow[d, "p"']& \cont_f\left(\pi_1^{\ga}(\c,p)\right) \arrow[ld, "U"]\\
\set_f
\end{tikzcd}
\]
\end{itemize}
These constructions are ($2$-) functorial and we have the following result.
\begin{theorem}[Grothendieck Characterization of neutralized Galois categories]\label{thmgal}
The assignment $$(\c,p)\mapsto \pi_1^{\ga}(\c,p)$$ induces a ($2$-) functor $$\Aut: \gal_{*}\to \stonegrp^{\op}$$ which is an equivalence of ($2$-) categories (when $\stonegrp$ is endowed with identities as $2$-morphisms). The inverse to $\Aut$ is $\Cont:\stonegrp^{\op}\to\gal_{*}$, which is the ($2$-) functor that assigns to a profinite group $\pi$ the neutralized Galois category $(\cont_f(\pi), U)$.
\end{theorem}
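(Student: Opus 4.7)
The plan is to take the two bullet points stated just above the theorem (Reconstruction and Recognition) as the substantive ingredients, and assemble them into the claimed $2$-equivalence. Since the discussion preceding the theorem shows that $\gal_{*}$ is effectively a $1$-category (all $2$-cells are identities once the $1$-cells are fixed), and since $\stonegrp^{\op}$ is given trivial $2$-structure, it suffices to exhibit $\Aut$ and $\Cont$ as mutually inverse $1$-functors between the underlying $1$-categories.

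\textbf{Step 1: $\Aut$ is a functor.} Given a $1$-morphism $(F,\alpha):(\c,p)\to(\c',p')$, I would define
\[
\Aut(F,\alpha)\colon \aut(p')\longrightarrow \aut(p),\qquad \eta\;\longmapsto\;\alpha^{-1}\cdot(\eta\underset{2}{\circ}\iota_F)\cdot\alpha.
\]
This is manifestly a group homomorphism; to see it is continuous, note that for every $C\in\ob\c$ its $C$-component depends only on $\eta_{FC}$ through the isomorphism $\alpha_C\colon p(C)\cong p'(FC)$, so composition with any projection $\aut(p)\to\aut(p(C))$ factors through $\aut(p')\to\aut(p'(FC))$, which is continuous by definition of the profinite topology. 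Functoriality of $\Aut$ is a direct verification using associativity of whiskering and the cocycle condition on the natural isomorphism data.

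\textbf{Step 2: $\Cont$ is a functor.} A continuous homomorphism $\phi\colon\pi\to\pi'$ induces the restriction-of-scalars functor $\cont_f(\phi)\colon \cont_f(\pi')\to\cont_f(\pi)$, which is exact and satisfies $U\circ\cont_f(\phi)=U$ on the nose, producing a $1$-morphism of neutralized Galois categories paired with the identity natural isomorphism. Functoriality is immediate.

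\textbf{Step 3: Equivalence via Reconstruction and Recognition.} Essential surjectivity of $\Aut$ is exactly Reconstruction: $\Aut\circ\Cont(\pi)=\pi_1^{\ga}(\cont_f(\pi),U)\cong \pi$. For fully faithfulness I would use Recognition to replace any source $(\c,p)$ and target $(\c',p')$ by the canonical models $(\cont_f(\pi_1^{\ga}(\c,p)),U)$ and $(\cont_f(\pi_1^{\ga}(\c',p')),U)$, thereby reducing the claim to the assertion that every $1$-morphism $(G,\beta)\colon(\cont_f(\pi_1),U)\to(\cont_f(\pi_2),U)$ in $\gal_{*}$ arises from a unique continuous $\phi\colon\pi_2\to\pi_1$. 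Existence and uniqueness of $\phi$ would follow by evaluating $(G,\beta)$ on the transitive $\pi_1$-sets $\pi_1/N$, for $N$ ranging over open normal subgroups of $\pi_1$, producing a compatible cofiltered system of continuous quotient maps whose limit is the required $\phi$.

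\textbf{Expected main obstacle.} The heart of the argument is fully faithfulness: one must show that a morphism of neutralized Galois categories between $\cont_f(\pi_1)$ and $\cont_f(\pi_2)$ is completely determined by, and can be constructed from, a unique continuous group homomorphism. The subtle point is that one has to verify that the Recognition equivalence is natural in $(\c,p)$, so that replacement by the canonical model is safe when computing hom-sets; once this is in hand, the rest is essentially the classical Galois correspondence applied at each open normal level. The contravariance of $\Aut$ and the bookkeeping of the natural isomorphism $\alpha$ in the definition of $1$-morphisms is the only place where one might be led astray, so I would be especially careful to check that the isomorphism $\Aut\circ\Cont\cong\id$ coming from Reconstruction is natural in $\pi$ with the correct variance, and likewise for $\Cont\circ\Aut\simeq\id$.
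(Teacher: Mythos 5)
The paper offers no proof of this theorem beyond the citation to SGA~1, Expos\'{e} V, and your outline is essentially the standard argument given there: $\Aut$ acts on morphisms by conjugated whiskering, $\Cont$ by restriction of the action along a continuous homomorphism, Reconstruction gives essential surjectivity, and full faithfulness reduces via Recognition to analysing morphisms between the canonical models $(\cont_f(\pi),U)$. The one slip is terminological rather than substantive: the homomorphisms $\pi_2\to\pi_1/N$ you extract from $G(\pi_1/N)$ (which is a $\pi_1/N$-torsor in finite $\pi_2$-sets, pointed via $\beta$) need not be surjective, so ``quotient maps'' overstates it, but the compatible cofiltered system still assembles into the required continuous $\phi:\pi_2\to\varprojlim_N\pi_1/N\cong\pi_1$.
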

\begin{proof}
See \cite{sga1}, Expos\'{e} V.
\end{proof}
\begin{remark}
If we did not consider Galois categories with fixed fibre functors on them, a fully $2$-categorical equivalence statement is needed, in which Galois categories should be compared with Stone gerbes.
\end{remark}

\section{Carboni's separable algebras}
Every field $k$ defines its Galois category $\fet_k$, the category of ``finite \'{e}tale covers over $\spec k$'', and to get a fibre functor on it one needs to pick a separable closure of $k$ (which is where the virtue of choosing $k$ to be separably closed in this article comes in). This geometrically defined category admits a description using the algebraic notion of ``separable'' or ``\'{e}tale'' $k$-algebra, which is precisely the notion on which Carboni builds his generalized notion of \emph{commutative separable algebra in a symmetric monoidal category} that I will be using to pass from Tannakian to Galois categories. In order to give meaning to the statements I made above, I would like to discuss the notion of finite \'{e}tale schemes in some detail and in particular explain their algebraic side. Thus this section starts with a brief introduction to the relevant scheme-theoretic notions.
\subsection{Basics of Scheme Theory}
Let's start with defining some fundamental scheme-theoretic notions.
\begin{itemize}
\item  A \emph{ringed space} is a pair $(V,\mathcal{O}_V)$ consisting of a topological space $V$ and a sheaf of rings $\mathcal{O}_V$ called the \emph{structure sheaf}. 
\item A \emph{locally ringed space} is a ringed space $(V,\mathcal{O}_V)$ if the stalks $\mathcal{O}_{V,v}$ of $\mathcal{O}_V$ at every $v\in V$ are local rings. A \emph{morphism of locally ringed spaces} $(V,\mathcal{O}_V)\to(W,\mathcal{O}_W)$ is a pair $(f,f^{\sharp})$ consisting of 
\begin{itemize}
\item a continuous morphism $f:V\to W$ of topological spaces,
\item a morphism $f^{\sharp}:\mathcal{O}_W\to f_{\ast}\mathcal{O}_V$ of sheaves of rings,
\item such that for any $v\in V$ the homomorphism of rings on the stalks $$f^{\sharp}_v:\mathcal{O}_{W,f(v)}\to \mathcal{O}_{V,v}$$ is a homomorphism of local rings. 
\end{itemize}
\item For any commutative ring $R$ one can define a topology on the set $\spec R$ of prime ideals of $R$, called the \emph{Zariski topology}, having a basis of open sets given, for every $r\in R$, by the sets $$D(r):=\lbrace P\;:\;\text{$P$ prime with $r\notin P$} \rbrace,$$ and which admits a structure sheaf $\mathcal{O}_{\spec R}$, making $(\spec R, \mathcal{O}_{\spec R})$ into a locally ringed space called the \emph{(prime) spectrum of $R$}. Notice that spectra of fields are just points as the only prime ideal of a field $k$ is the trivial ideal so $\spec k=\lbrace (0)\rbrace$. 
\item An \emph{affine scheme} is a locally ringed space $(V,\mathcal{O}_V)$ isomorphic to the ringed space $(\spec R, \mathcal{O}_{\spec R})$ for some commutative ring $R$. A \emph{morphism of affine schemes} is a morphism of locally ringed spaces. The category of affine schemes will be denoted by $\aff$. 
\item A \emph{scheme} is a locally ringed space that admits an open covering by affine schemes. A \emph{morphism of schemes} is a morphism of locally ringed spaces. The category of schemes will be denoted by $\sch$. 
\end{itemize}
In fact the construction of the (affine) scheme $\spec R$ out of a commutative ring $R$, assembles into a fully faithful functor
\begin{eqnarray*}
\spec:\cring^{\op}&\to &\sch\\
R&\mapsto & (\spec R,\mathcal{O}_{\spec R}).
\end{eqnarray*}
Taking global sections of the structure sheaves of schemes (or \emph{taking the coordinate ring $\mathcal{O}(V)$ of a scheme $(V,\mathcal{O}_V)$}, terminology inherited from the situation generalized here from the level of affine varieties) induces a functor 
\begin{eqnarray*}
\mathcal{O}:\sch&\to &\cring^{\op}\\
(V,\mathcal{O}_V) &\mapsto & \Gamma_V(\mathcal{O}_V)=\mathcal{O}_V(V)=:\mathcal{O}(V),
\end{eqnarray*}
which is left adjoint to the functor $\spec$. In this way $\cring^{\op}$ becomes a reflective subcategory of $\sch$, 
\[\begin{tikzcd}[row sep=large, column sep=large]
\sch \arrow[d, shift left=1ex, "\mathcal{O}"]\\
\cring^{\op} \arrow[u, hook, shift left=1ex, "\spec", "\vdash"']
\end{tikzcd}
\]
which realizes $\cring^{\op}$ as $$\cring^{\op}\simeq \aff,$$ the full subcategory of $\sch$ consisting of affine schemes. 
\begin{remark}
We will refer to the adjunction above as \emph{the structure adjunction}.
\end{remark}
Now let us recall \emph{Grothendieck's relative point of view}, in which the focus is not just on schemes in isolation, but rather, schemes relative to some ``base scheme''.
\begin{definition}
Let $S=(V,\mathcal{O}_V)$ be a scheme. 
\begin{itemize}
\item A \emph{scheme over $S$}, or an \emph{$S$-scheme} is an object of the over-category $\sch/S$, i.e. a pair $(X,f)$, consisting of a scheme $X$ together with a morphism of schemes $f:X\to S$, called the \emph{structural morphism}.
\item If $(Y,g)$ is another $S$-scheme, a \emph{morphism of $S$-schemes} from $(X,f)$ to $(Y,g)$ is a morphism in $\sch/S$, i.e. it is a morphism of schemes $u:X\to Y$,
\[\begin{tikzcd}[row sep=large, column sep=large]
X\arrow[rr, "u"]\arrow[rd,"f"]&&Y\arrow[ld, "g"]\\
&S,
\end{tikzcd}
\] 
such that $f=g\circ u$.
\end{itemize} 
\end{definition}
\begin{remark}
If the scheme $S$ is the spectrum $\spec R$ for some ring $R$, we will refer to a scheme over $S$ as a \emph{scheme over $R$}, or an \emph{$R$-scheme}. 
\end{remark}
Every ring admits a unique morphism from the ring of integers $\mathbb{Z}$, and every scheme admits a unique morphism to the scheme $\spec \mathbb{Z}$, courtesy of the structure adjunction bijection $$\hom_{\sch}(X,\spec R)\cong\hom_{\cring}(R, \mathcal{O}(X)),$$ allowing us to think of all schemes as \emph{schemes over $\mathbb{Z}$}. We want to obtain a ``relative to a base scheme $S$'' version of the structure adjunction. For this, we need to be able to ``base-change''.
\begin{proposition}
The category $\sch$ of schemes has pullbacks. 
\end{proposition}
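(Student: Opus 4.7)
The plan is to reduce to the affine case by means of the structure adjunction and then glue. For the affine case, suppose we are given a cospan $\spec A \to \spec R \leftarrow \spec B$. I claim the pullback in $\sch$ is $\spec(A \otimes_R B)$, with $A \otimes_R B$ the pushout in $\cring$. Indeed, for any scheme $Z$ the structure adjunction gives a bijection
\[
\hom_{\sch}(Z,\spec(A\otimes_R B))\;\cong\;\hom_{\cring}(A\otimes_R B,\mathcal{O}(Z)),
\]
and by the universal property of $\otimes_R$ this is in turn in bijection with pairs of ring maps $A\to\mathcal{O}(Z)$ and $B\to\mathcal{O}(Z)$ that agree after precomposition with the maps from $R$. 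Applying the structure adjunction once more converts this into the set of cones over the original cospan with apex $Z$, which is precisely what is required of a pullback in $\sch$.

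For the general case I would glue: given $f:X\to S$ and $g:Y\to S$, cover $S$ by affine opens $\lbrace U_i\rbrace$, cover each $f^{-1}(U_i)$ by affine opens $\lbrace V_{ij}\rbrace$, and cover each $g^{-1}(U_i)$ by affine opens $\lbrace W_{ik}\rbrace$. For each triple $(i,j,k)$ form the affine pullback $P_{ijk}:=V_{ij}\times_{U_i} W_{ik}$ as above. On an overlap such as $V_{ij}\cap V_{i'j'}$ (covered by affine opens lying inside both, and similarly on the $W$ side and on $S$), the universal property yields canonical isomorphisms between the affine pullbacks constructed from either chart, and these isomorphisms satisfy the cocycle condition on triple overlaps — again by uniqueness in the universal property. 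The scheme-gluing lemma then produces a scheme $P$ with open immersions from each $P_{ijk}$, equipped with canonical morphisms $P\to X$ and $P\to Y$ (obtained by gluing the projections $P_{ijk}\to V_{ij}\hookrightarrow X$ and $P_{ijk}\to W_{ik}\hookrightarrow Y$) whose composites with $f$ and $g$ coincide.

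The universal property of $P$ is then verified locally on the source: a compatible cone $(u:Z\to X,v:Z\to Y)$ over $S$ can be tested on a sufficiently fine affine open cover of $Z$, each member of which maps into some $V_{ij}$, some $W_{ik}$, and some $U_i$. The affine case produces a unique local morphism into the corresponding $P_{ijk}$, and by uniqueness these glue to a unique global morphism $Z\to P$ by the sheaf property of morphisms of schemes. The main obstacle is purely notational: the gluing step involves careful bookkeeping of the cocycle condition on triple overlaps and of the fact that ``being a pullback'' is a local property on each of the source, target, and base, so that the universal property established affine-locally extends to $\sch$. Conceptually nothing deep happens beyond the affine step, and the construction is standard.
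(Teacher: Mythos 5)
Your proof is correct and follows the same route as the paper's: the affine case is handled by identifying $X\times_Y Z$ with $\spec(A\otimes_B C)$ via the structure adjunction and the pushout property of the tensor product, and the general case is obtained by glueing affine pullbacks. You simply spell out the adjunction argument and the cocycle bookkeeping that the paper leaves implicit.
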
 
\begin{proof}
This is essentially a consequence of the tensor product of rings. For affine schemes $X=\spec A$, $Y=\spec B$ and $Z=\spec C$, for rings $A, B$ and $C$, the pullback scheme 
\[\begin{tikzcd}[row sep=large, column sep=large]
X\times_Y Z\arrow[d, dotted] \arrow[r, dotted]&X=\spec A\arrow[d]\\
Z=\spec C\arrow[r]&Y=\spec B
\end{tikzcd}
\] 
is the affine scheme corresponding to the tensor product of rings, $$X\times_Y Z=\spec (A\otimes_B C),$$ and more generally, the pullback of schemes is defined by glueing together affine pullbacks of the above form.
\end{proof}
Now let us fix a ring $R$. Since $\sch$ has pullbacks, we can look at the ``over $R$'' version of the structure adjunction.
\begin{proposition}
For any ring $R$, the structure adjunction gives an adjunction
\[\begin{tikzcd}[row sep=large, column sep=large]
\sch/\spec R \arrow[d, shift left=1ex, "\mathcal{O}_R"]\\
\cring^{\op}/R, \arrow[u, hook, shift left=1ex, "\spec_R", "\vdash"']
\end{tikzcd}
\]
where $\mathcal{O}_R$ is the evident induced functor and $\spec_R$ is the composite
\[\begin{tikzcd}[row sep=large, column sep=large]
\cring^{\op}/R\cong\cring^{\op}/(\mathcal{O}\circ \spec R)\to \sch/(\mathcal{O}\circ\spec R)\to \sch/\spec R
\end{tikzcd}
\] 
of the evident functor induced by $\spec$ with the pullback along the $(\mathcal{O}\dashv \spec)$-unit at $\spec R$.
\end{proposition}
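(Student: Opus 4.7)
The plan is to recognize this as a special case of the general principle that any adjunction lifts to slice categories in the following (easy) direction. If $F \colon \mathcal{C} \to \mathcal{D}$ is left adjoint to $G \colon \mathcal{D} \to \mathcal{C}$, then for every object $d \in \mathcal{D}$ one obtains an induced adjunction
\[\begin{tikzcd}[row sep=large]
\mathcal{C}/Gd \arrow[d, shift left=1ex, "F_d"]\\
\mathcal{D}/d \arrow[u, shift left=1ex, "G_d", "\vdash"']
\end{tikzcd}\]
where $G_d$ is obtained simply by applying $G$ to morphisms over $d$ (sending $Y \to d$ to $GY \to Gd$), while $F_d$ sends $(X, X \to Gd)$ to $(FX,\; FX \to FGd \xrightarrow{\epsilon_d} d)$ using the counit of $F \dashv G$. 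Crucially, no pullbacks are needed in this direction of slicing (as opposed to the dual situation, in which one fixes an object of $\mathcal{C}$ and must appeal to pullbacks in $\mathcal{C}$ to build a right adjoint on slices).

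First I would verify $F_d \dashv G_d$ directly: the required hom-bijection is simply the restriction of the ambient bijection $\hom_{\mathcal{D}}(FX, Y) \cong \hom_{\mathcal{C}}(X, GY)$, and naturality in $Y$, applied to the structural morphism $Y \to d$, shows that ``being a morphism over $d$'' on the $\mathcal{D}$-side corresponds precisely to ``being a morphism over $Gd$'' on the $\mathcal{C}$-side. This is a purely formal check with no scheme-theoretic content whatsoever.

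Then I would instantiate the principle with $F = \mathcal{O}$, $G = \spec$, and $d = R$, producing immediately an adjunction between $\sch/\spec R$ and $\cring^{\op}/R$ whose left adjoint sends $(X, X \to \spec R)$ to $\mathcal{O}(X)$ equipped with the composite $\mathcal{O}(X) \to \mathcal{O}(\spec R) \xrightarrow{\epsilon_R} R$ — visibly the $\mathcal{O}_R$ of the statement. The remaining (minor) work, which I would call the ``main obstacle'' only in the very loose sense that it is the only step requiring care, is to identify the abstractly produced right adjoint $G_R$ with the $\spec_R$ as described in the statement. For this I would observe that applying $\spec$ to a ring map $R \to A$ factors as: (i) re-express $R$ as $\mathcal{O}\spec R$ via $\epsilon_R^{-1}$, an isomorphism because $\spec$ is fully faithful; (ii) apply the induced $\spec \colon \cring^{\op}/(\mathcal{O}\spec R) \to \sch/\spec(\mathcal{O}\spec R)$; and (iii) pull back along $\eta_{\spec R} \colon \spec R \to \spec\mathcal{O}\spec R$, which is itself an isomorphism since the triangle identity $\spec(\epsilon_R) \circ \eta_{\spec R} = \id_{\spec R}$ combined with the invertibility of $\epsilon_R$ forces $\eta_{\spec R} = \spec(\epsilon_R)^{-1}$. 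Every step is either an isomorphism of slice categories or a direct application of $\spec$, so the composite coincides with the naive ``$\spec$ on slice morphisms'' description, and hence with the $G_R$ produced by the general construction — completing the identification and the proof.
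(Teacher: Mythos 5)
Your proposal is correct and is essentially the paper's own proof written out in full: the paper dismisses this as ``standard category theory'', pointing only to the invertibility of the counit $\mathcal{O}\circ\spec R\to R$ coming from reflectivity of $\cring^{\op}$ in $\sch$, which is exactly the fact you use to collapse the composite defining $\spec_R$ to ``apply $\spec$ over $R$'' and to match it with the abstractly lifted right adjoint. Your explicit verification of the slice-lifting principle and of the triangle-identity argument for $\eta_{\spec R}$ being an isomorphism supplies the details the paper leaves implicit (and quietly corrects the paper's typo $\sch/(\mathcal{O}\circ\spec R)$ for $\sch/\spec(\mathcal{O}\circ\spec R)$).
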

\begin{proof}
Standard category theory - notice that the natural isomorphism $R\cong \Gamma\circ \spec R$ for any $R\in\cring$ is a consequence of the fact that $\cring^{\op}$ is reflective in $\sch$.
\end{proof}
\begin{remark}[Affine $R$-schemes]
The slice category $\cring^{\op}/R$ can be identified with the category $\calg_R^{\op}$, opposite to the category of commutative $R$-algebras under the relative structure adjunction, and we have $$\calg_R^{\op}\simeq \aff/\spec R=:\aff_R,$$
with the objects of the category $\aff_R$ often called \emph{affine $R$-schemes}.
\end{remark}
\subsection{Finite \'{e}tale covers and classical separable $k$-algebras}
The motivating example of a Galois category for Grothendieck was the category $\fet_X$ of finite \'{e}tale covers of a connected scheme $X$, i.e. the full subcategory $$\fet_X\subset \sch/X$$ consisiting of the schemes over $X$ which are ``finite and \'{e}tale\footnote{I will not go into the definitions of these morphisms in detail as that would divert me from the main goal of this paper,  especially because in the case of interest for us, namely when $X=\spec k$, finite \'{e}tale $X$-schemes take a much simpler form - see the next remark.}''. For the general definitions see Expos\'{e} I of \cite{sga1}.
\begin{remark}\label{finitenesset}
A morphism $S\to\spec k$ is finite and \'{e}tale if and only if $S$ is a finite disjoint union of spectra of finite separable field extensions of $k$. In particular, for a separably closed field $\Omega$, we have $$\fet_{\spec\Omega}\simeq \set_f.$$
\end{remark}
\par For every choice of a geometric point $\overline{x}:\spec\Omega\to X$ of $X$, where $\Omega$ is a separably closed field, and for any finite \'{e}tale $X$-scheme $f:S\to X$, consider the pullback 
\[
\begin{tikzpicture}
\node (A) at (0,0) {$S\times_X \spec\Omega$};
\node (B) at (2,0) {$S$};
\node (C) at (2,-2) {$X$};
\node (D) at (0,-2) {$\spec \Omega$};
\draw[dashed, ->] (A) to node (f) {} node[label=above:$\scriptstyle $] {} (B);
\draw[->] (B) to node (f) {} node[label=right:$\scriptstyle f$] {} (C);
\draw[dashed,->] (A) to node (f) {} node[label=left:$\scriptstyle $] {} (D);
\draw[->] (D) to node (f) {} node[label=below:$\scriptstyle \overline{x}$] {} (C);
\end{tikzpicture}
\]
and denote by $$\phi_{\overline{x}}(S):=|S\times_X \spec\Omega|$$ the underlying set of the finite \'{e}tale scheme $S\times_X \spec\Omega$ over $\spec \Omega$, which is finite by Remark \ref{finitenesset}. In this way we get a fibre functor 
\[
\begin{tikzpicture}
\node (A) at (0,0) {$\scriptstyle \set_f$};
\node (B) at (0,1.5) {$\scriptstyle \fet_X$};
\draw[->] (B) to node (f) {} node[label=left:$\scriptstyle \phi$] {} (A);
\end{tikzpicture}
\]
on $\fet_X$ sending a finite \'{e}tale cover $S$ of $X$ to $\phi_{\overline{x}}(S)$. 
\begin{remark}
The consideration of $\fet_X$ allowed Grothendieck to define a natural generalization of the absolute Galois group of a field $k$, the \emph{\'{e}tale fundamental group of $X$ based at a geometric point $\overline{x}$}: the Galois fundamental group of $\fet_X$ based at $\phi$. It provides a parallel, in Algebraic Geometry, to the (Poincar\'{e}) fundamental group of topological spaces based at a point.
\end{remark}
Let's now discuss the case $X=\spec k$ in more detail. In this case, a finite \'{e}tale scheme $S\to\spec k$ takes the form $$S=\spec A,$$ where $A$ is a ``(finite) commutative separable $k$-algebra''. Let's recall the definition of these  objects, as it is on this algebraic side of the theory that Carboni builds his monoidal notion.
\begin{definition}[Classical Commutative Separable algebras over a field] 
Let $k$ be a field. A commutative algebra $A$ over $k$ is \emph{classically separable} if for every field extension $k\subseteq k'$ of $k$, $J(A\otimes_k k')=0$, i.e. $A\otimes_k k'$ has zero Jacobson radical. 
\end{definition}
I will denote by $$\fsep_k$$ the full subcategory of $\calg_k$ on the finite\footnote{Recall that a commutative $k$-algebra $A$ is \emph{finite over $k$} if it is finite-dimensional as a $k$-vector space.} separable $k$-algebras. The structure adjunction induces an equivalence \begin{eqnarray}\label{fetsep}
\fet_{\spec k}\simeq \fsep_k^{\op}.
\end{eqnarray}
Notice the following standard characterization of separability, which is the algebraic parallel of Remark \ref{finitenesset}.
\begin{proposition}[Finite Commutative Separable algebras over a field]\label{propsep}
Let $A$ be a finite commutative algebra over $k$. Then $A$ is classically separable if and only if it is a finite direct product of finite separable field extensions of $k$.
\end{proposition}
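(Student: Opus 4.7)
I would prove both implications separately, exploiting the fact that a finite commutative $k$-algebra is Artinian together with the standard reducedness criterion for separability of a single finite field extension.

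For the forward direction, suppose $A$ is classically separable. Since $A$ is finite-dimensional, it is Artinian, and the structure theorem for commutative Artinian rings yields a decomposition $A \cong \prod_{i=1}^n A_i$ into local Artinian factors. Taking $k' = k$ in the separability hypothesis gives $J(A) = 0$; the Jacobson radical of a finite product is the product of the Jacobson radicals, which in each local Artinian factor is the unique maximal ideal, so every $A_i$ must have trivial maximal ideal and is therefore a field $L_i$, necessarily finite over $k$. To see that each $L_i/k$ is separable, base-change to an algebraic closure $\overline{k}$: by hypothesis $J(A \otimes_k \overline{k}) = 0$, and since $A \otimes_k \overline{k} \cong \prod_i (L_i \otimes_k \overline{k})$ is finite over $\overline{k}$ hence Artinian, its Jacobson radical coincides with its nilradical. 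Therefore each factor $L_i \otimes_k \overline{k}$ is reduced, and this is exactly the classical reducedness criterion for separability of $L_i/k$.

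For the converse, since the tensor product distributes over finite direct products, it suffices to treat a single finite separable extension $A = L$. By the primitive element theorem write $L = k(\alpha)$ with minimal polynomial $f(x) \in k[x]$ separable, so that $L \otimes_k k' \cong k'[x]/(f(x))$ for any extension $k'/k$. Separability of $f$ ensures it has no repeated roots, so over $k'$ it factors into pairwise coprime irreducibles $f = \prod_j g_j$; the Chinese remainder theorem then gives $L \otimes_k k' \cong \prod_j k'[x]/(g_j)$, a finite product of fields, which has trivial Jacobson radical.

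The main technical point to watch is the distinction between the Jacobson radical and the nilradical: the hypothesis is phrased in terms of $J$, but the reducedness argument really concerns the nilradical, and the identification of the two is precisely where the Artinian hypothesis enters. The only other non-trivial input is the standard equivalence between separability of a finite field extension $L/k$ and reducedness of $L \otimes_k \overline{k}$; a variant avoiding both this result and the primitive element theorem could proceed via the non-degeneracy of the trace form or the separability idempotent, but the route sketched above is the most transparent and directly mirrors the structural description one wants for $\fsep_k$.
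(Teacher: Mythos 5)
Your proof is correct and complete; note that the paper itself states this proposition without proof, presenting it as the standard characterization, so your Artinian decomposition plus the reducedness criterion (and primitive element theorem for the converse) is exactly the expected textbook argument. The one point worth making fully explicit is that separability of $f$ over $k$ persists over an arbitrary extension $k'$ because $\gcd(f,f')=1$ in $k[x]$ remains a B\'ezout identity in $k'[x]$, which is what guarantees the pairwise coprime factorization you feed into the Chinese remainder theorem.
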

Thus when $k$ is separably closed, every finite commutative separable algebra over $k$ is of the form $k^n$, inducing the expected contravariant equivalence $$\fsep_k^{\op}\simeq \set_f.$$
Under equivalence $(\ref{fetsep})$, the fibre functor $p:\fsep_k^{\op}\to\set_f$ can be identified with the functor
\[
\begin{tikzpicture}
\node (A) at (0,0) {$\set_f$};
\node (B) at (0,1.5) {$\fsep_k^{\op}$};
\draw[->] (B) to node (f) {} node[label=left:$\scriptstyle \text{$p=$}$, label=right:$\scriptstyle \text{$\calg_k\left(-,\overline{k}\right)$}$] {} (A);
\end{tikzpicture}
\]
\begin{remark}
Notice that $p$ is \emph{not} a representable functor: $\overline{k}$ is not a separable $k$-algebra. 
\end{remark}
The Galois fundamental group of this neutralized Galois category based at $p$ identifies as $$\pi_1^{\ga}(\fsep_k^{\op},p)\cong \gal(\overline{k}/k),$$ the absolute Galois group of $k$ (which is profinite with the Krull topology). This fibre functor thus induces, by Theorem $\ref{thmgal}$, an equivalence 
\begin{eqnarray}\label{contgal}
\fsep_k^{\op}\simeq \cont_f(\gal(\overline{k}/k))
\end{eqnarray}
for any choice $\overline{k}$ of a separable closure of $k$.
\subsection{Carboni's Commutative Separable Algebras in a monoidal category}
Let me give the definition appearing in \cite{carboni} right away; a discussion of how to recover the classical separable algebras appearing in Grothendieck's reformulation of Galois theory will follow. 
\begin{definition}[Commutative Separable Algebras in a Symmetric Monoidal Category]
A \emph{commutative separable monoid} in a symmetric monoidal category $\vv$ is an object $A\in\ob\vv$ equipped with a commutative monoid structure $(A, \mu, \eta)$ and a cocommutative comonoid structure $(A,\delta,\epsilon)$ satisfying the axioms
\begin{itemize}
\item[(U)] $\mu\circ\delta=1$, and
\item[(D)]  $(1\otimes \delta)(\mu\otimes 1)=\delta\circ\mu=(\delta\otimes 1)(1\otimes \mu)$
\end{itemize}
\begin{remark}
Notice that in \cite{carboni} the order of composition of $\mu$ with $\delta$ in his axioms (U) and (D) is accidentally reversed - the compositions should be as in the definition above.
\end{remark}
\end{definition}
This definition is inspired by the following definition, that Carboni attributes to DeMeyer and Ingraham citing \cite{demeyeringraham}. 
\begin{definition}[DeMI-separable algebras over a commutative ring $R$]\label{defcsepabs}
Let $R$ be a commutative ring and let $A$ be a not necessarily commutative algebra over $R$. We say that \emph{$A$ is a DeMI-separable algebra over $R$} if $A$ is projective as a left $A\otimes_R A^{\op}$-module under the action 
\begin{eqnarray}\label{demistr}
\cdot:(A\otimes_R A^{\op})\times A&\to& A\\
(a\otimes a',a'')&\mapsto& (a\otimes a')\cdot a'':=aa''a',
\end{eqnarray}
where $A^{\op}$ denotes the opposite ring of $A$. 
\end{definition}
One can then use the fact that for any DeMI-separable algebra $A$ over $R$ there exists a left $A\otimes_R A^{op}$-module (epi-) morphism of left $A\otimes A^{\op}$-modules
\begin{eqnarray}
m:A\otimes_R A^{\op}&\to& A\\
\sum_i a_i\otimes a_i'&\mapsto&\sum_i a_i a_i',
\end{eqnarray} 
called the {augmentation map of $A$}, for which the short exact sequence
\[
\begin{tikzpicture}
\node (A) at (0,0) {$0$};
\node (B) at (1.5,0) {$\kernel m$};
\node (C) at (3.5,0) {$A\otimes_R A^{\op}$};
\node (D) at (5.5,0) {$A$};
\node (E) at (6.7,0) {$0$};

\draw[->] (A) to (B);
\draw[->] (B) to (C);
\draw[->] (C) to (D);
\draw[->] (C) to node (g) {} node[label=above:$\scriptstyle m$] {} (D);

\draw[->] (D) to (E);
\end{tikzpicture}
\] 
splits (i.e. the augmentation map $m$ admits a section), to verify that \emph{commutative} DeMI-separable algebras over $R$ can be identified, in the symmetric monoidal category $\Proj_R$ of projective modules over a commutative ring $R$, with those commutative algebras $(A,\mu,\eta)$ admitting also a cocommutative coalgebra structure $(A,\delta,\epsilon)$, which satisfy precisely axioms (U) and (D) of Definition $\ref{defcsepabs}$.
\begin{remark}[The connection with classically separable algebras]\label{classep}
In order for an algebra $A$ over $R$ to be DeMI-separable it is required to be projective as a left module over $A\otimes_R A^{\op}$. If $A$ is also projective as a left module over $R$ (with the canonical action), then $A$ is finitely generated as an $R$-module (see \cite{demeyeringraham}, Proposition II.2.1). Thus when $R$ is a field $R=k$, the notion of commutative DeMI-separable algebra over $k$ coincides with that of a finite (classically) separable algebra over $k$ (recall that in the case of classically separable algebras commutativity is part of the definition as it is customary in Algebraic Geometry).
\end{remark}
For the purposes of this article, Carboni's terminology is quite satisfactory, as his notion of separabe algebra is used in a Galois-theoretic context. However, this notion appears in modern literature in other contexts, the following being two of them which I consider useful to keep in mind.
\begin{itemize}
\item Denoting by $\ff$ the PROP for commutative monoids and by $\ff^{\op}$ the PROP for commutative comonoids,  one can view commutative separable algebras in $\vv$ as the algebras for the composite prop $$\ff^{\op}\otimes_{\mathbb{P}}\ff$$ as in Example 5.4 of \cite{lack}.
\item Carboni's notion of a commutative separable monoid also coincides with the notion of a \emph{special commutative Frobenius monoid} appearing in more modern literature. In my opinion saying ``special'' to refer to a commutative Frobenius monoid satisfying axiom (U) of Definition $\ref{defcsepabs}$ is not ideal - perhaps ``split commutative Frobenius monoid'' is better terminology, because of the splitting of the augmentation-induced short exact sequence at the level of DeMI-separable algebras. 
\end{itemize}
I still haven't explained how to assemble commutative separable monoids in $\vv$ into a category and this is on purpose - it requires a bit of attention. Indeed one might expect that the morphisms between commutative separable monoids should be those which are \emph{both monoid and comonoid homomorphisms}. However this is not a good choice: in this case such morphisms are automatically invertible and we end up with a \emph{groupoid} of them. 
\begin{remark}[$2$-dimensional $k$-TQFTs]
The aforementioned groupoid resulting from the case $\vv=\Vect_k^{\fd}$, i.e. the groupoid $\csep(\Vect_k^{\fd})_{\mon,\comon}$ of finite separable $k$-algebras with morphisms those which are both monoid and comonoid homomorphisms, famously determines $2$-dimensional topological quantum field theories: there is an equivalence of categories $$\mathbf{2dTQFT}_k:=\mathbf{SymMon}(\mathbf{2dCob},\Vect_k^{\fd})\simeq\csep(\Vect_k^{\fd})_{\mon,\comon},$$ 
where $\mathbf{2dCob}$ denotes symmetric monoidal category of $2$-dimensional cobordisms, sending a $2$-dimensional $k$-TQFT $F:\mathbf{2dCob}\to\Vect_k$ to its value $F(\Sigma)$ at the circle $\Sigma$.  
\end{remark}
Instead, we want to use only one kind of morphisms, either monoid or comonoid homomorphisms.
\begin{definition}
Let $\vv$ be a symmetric monoidal category. We will denote by $$\csep(\vv)_{\mon}$$ the category of commutative separable monoids in $\vv$ with morphisms the monoid homomorphisms, and I will denote by $$\csep(\vv)_{\comon}$$ the category of commutative separable monoids in $\vv$ with morphisms the comonoid homomorphisms.
\end{definition}
Carboni proves that these two categories are dual to each other.
\begin{proposition}
Let $\vv$ be a symmetric monoidal category. We have an equivalence of categories $$\csep(\vv)_{\comon}\simeq \csep(\vv)_{\mon}^{\op}.$$
\end{proposition}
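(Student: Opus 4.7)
The plan is to exploit the fact that axioms (U) and (D) endow every commutative separable monoid with a self-duality, and to use this duality to transpose monoid homomorphisms into comonoid homomorphisms and vice versa. Write $\beta_A:=\epsilon_A\circ\mu_A:A\otimes A\to I$ and $\gamma_A:=\delta_A\circ\eta_A:I\to A\otimes A$.

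First I would verify that $(A,\beta_A,\gamma_A)$ exhibits $A$ as its own dual in $\vv$, i.e.\ that the snake identities hold. The calculation is forced by the Frobenius axiom (D) together with the (co)unit axioms: for instance,
\begin{align*}
(\id_A\otimes\beta_A)\circ(\gamma_A\otimes\id_A)
&=(\id_A\otimes\epsilon_A)\circ(\id_A\otimes\mu_A)\circ(\delta_A\otimes\id_A)\circ(\eta_A\otimes\id_A)\\
&=(\id_A\otimes\epsilon_A)\circ\delta_A\circ\mu_A\circ(\eta_A\otimes\id_A)\\
&=(\id_A\otimes\epsilon_A)\circ\delta_A=\id_A,
\end{align*}
where the second equality is axiom (D) and the last is the counit axiom. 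The other snake identity is symmetric.

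Next I would define a functor $F\colon\csep(\vv)_{\mon}^{\op}\to\csep(\vv)_{\comon}$ which is the identity on objects, and sends a monoid homomorphism $f\colon A\to B$ to its mate
$$f^{\sharp}:=(\id_A\otimes\beta_B)\circ(\id_A\otimes f\otimes\id_B)\circ(\gamma_A\otimes\id_B)\colon B\to A.$$
The crucial step, which I expect to be the main obstacle, is showing that $f^{\sharp}$ is indeed a comonoid homomorphism whenever $f$ is a monoid homomorphism. This is a graphical calculation that interchanges $f$ with the Frobenius structure: one uses the identity $f\circ\mu_A=\mu_B\circ(f\otimes f)$ and $f\circ\eta_A=\eta_B$ against the Frobenius axiom to show $\delta_A\circ f^{\sharp}=(f^{\sharp}\otimes f^{\sharp})\circ\delta_B$ and $\epsilon_A\circ f^{\sharp}=\epsilon_B$. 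The cleanest way to organize this calculation is through the usual string-diagram argument for the transpose in a compact closed setting, specialized to the self-dual case, where the ``bent'' multiplication of $A$ on one side of the string picture is exactly the comultiplication on the other side, by (D).

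Once $F$ is established as a functor, functoriality $(g\circ f)^{\sharp}=f^{\sharp}\circ g^{\sharp}$ and $\id^{\sharp}=\id$ follow from the snake identities verified above in a routine way. Finally, I would construct the functor in the opposite direction by symmetry: the axioms (U) and (D) are invariant under swapping $(\mu,\eta)\leftrightarrow(\delta,\epsilon)$, so the same transpose construction applied to a comonoid homomorphism yields a monoid homomorphism, giving a functor $G\colon\csep(\vv)_{\comon}\to\csep(\vv)_{\mon}^{\op}$. A direct snake-identity computation shows $F\circ G$ and $G\circ F$ are identities on morphisms, upgrading the equivalence to an isomorphism of categories.
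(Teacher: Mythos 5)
The paper itself does not prove this proposition but defers to the Corollary of Section 3 of \cite{carboni}, and your argument --- exhibiting each commutative separable monoid as self-dual via the pairing $\epsilon\circ\mu$ and copairing $\delta\circ\eta$ (which needs only the Frobenius axiom (D) and the (co)unit laws, as your snake computation shows), then transposing morphisms across this duality --- is precisely the mechanism used there. Your proposal is correct and essentially the same approach; the one step you leave as a sketch, that the mate $f^{\sharp}$ of a monoid map is a comonoid map, is the standard Frobenius string-diagram calculation and goes through as you describe.
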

\begin{proof}
See the Corollary of Section 3 in \cite{carboni}.
\end{proof}
Let's define next the categories on which Carboni's construction will be applied.
\section{Tannakian Categories}
The main references for Tannakian categories are \cite{deligne}, \cite{delignemilne} and \cite{saavedra}.
\subsection{Definitions}
Throughout this section $k$ will denote a field.
\begin{definition}[Pre-Tannakian category] \label{defpretann}
A \emph{pre-Tannakian category over $k$} or \emph{$k$-pre-Tannakian category} is an essentially small category $\tt$
\begin{itemize}
\item[(PRETAN1)] which is symmetric monoidal with  tensor product $\otimes$ and tensor unit $\mathbf{1}$,
\item[(PRETAN2)] which is rigid (i.e. every object $T$ has a dual $T^{\vee}$),
\item[(PRETAN3)] and abelian \emph{as a symmetric monoidal category}, i.e. $\otimes$ is required to be biadditive, and finally
\item[(PRETAN4)] such that the endomorphism ring of the tensor unit satisfies $\End(\mathbf{1})\cong k$.
\end{itemize}
\end{definition}
\begin{remarks}
I would like to make the following remarks:
\begin{itemize}
\item As it is well-known, the requirement (PRETAN2) of rigidity on top of (PRETAN1) makes a symmetric monoidal category into what is known as a \emph{compact closed} category: such categories admit an \emph{internal hom} bifunctor
\begin{eqnarray*}
[-,-]:\tt^{\op}\otimes\tt&\to&\tt\\
(T,L)&\mapsto& T^{\vee}\otimes L,
\end{eqnarray*}
turning $\tt$ into a closed monoidal category.
\item Being abelian and thus pre-abelian, a pre-Tannakian category $\tt$ has all finite limits and all finite colimits.
\item Only assuming (PRETAN1)-(PRETAN3), notice that $\End(\mathbf{1})$ becomes a monoid in abelian groups, i.e. a ring, and it is commutative by the Eckmann-Hilton argument. The left unitor induces the structure of an $\End(\mathbf{1})$-module on every object $T\in\ob\tt$ and (PRETAN4) makes every object into a $k$-vector space. In fact, a pre-Tannakian category $\tt$ becomes $k$-linear in such a way that $\otimes$ becomes $k$-bilinear. 
\end{itemize} 
\end{remarks}
The notion of a fibre functor is more complicated in the case of Tannakian categories than in the case of Galois categories. 
\begin{definition}[Tannakian Fibre Functors] 
Let $\tt$ be a pre-Tannakian category over $k$. A \emph{fibre functor on $\tt$ with with values in a $k$-algebra $A$} is a functor $\omega:\tt\to \Mod_A^{\fg}$ to the category of finitely generated $A$-modules,
\[
  \begin{tikzpicture}
  \node (A) at (0,0) {$\tt$};
  \node (B) at (0,-1.5) {$\Mod_A^{\fg}$};
 \draw[->] (A) to node[label=left:$\scriptstyle \omega$]{} (B);
\end{tikzpicture}
\]
which is 
\begin{itemize}
\item[(FIB1)] strong monoidal,
\item[(FIB2)] $k$-linear,
\item[(FIB3)] exact, 
\item[(FIB4)] and valued in the subcategory $\Proj_A^{\fg}$ of (finitely generated) projective $A$-modules.
\end{itemize}
\end{definition}
\begin{remark}
Similarly to the case of Galois categories, it is a consequence of the axioms that a Tannakian fibre functor is faithful. 
\end{remark}
We can now give the definition of a Tannakian category.
\begin{definition}[Tannakian category]
Let $\tt$ be a pre-Tannakian category. 
\begin{itemize}
\item We say that $\tt$ is \emph{$k$-Tannakian} if it admits a fibre functor with values in some (nonzero) $k$-algebra $A$.
\item We say that a Tannakian category is \emph{neutral} if it admits a fibre functor with values in $k$, i.e. if it admits a fibre functor 
\[
  \begin{tikzpicture}
  \node (A) at (0,0) {$\tt$};
  \node (B) at (0,-1.5) {$\Vect_k^{\fg}$};
 \draw[->] (A) to node[label=left:$\scriptstyle \omega$]{} (B);
\end{tikzpicture}
\]
to the category of finite-dimensional $k$-vector spaces. 
\item Finally, a \emph{neutralized Tannakian category over $k$} is a pair $(\tt,\omega)$ consisting of a neutral $k$-Tannakian category $\tt$ and a fibre functor $\omega:\tt\to\Vect_k^{\fd}$ with values in $k$. 
\end{itemize}
\end{definition}
\begin{remarks}
Notice the following:
\begin{itemize}
\item Not every pre-Tannakian category over $k$ is Tannakian: it is possible that no fibre functor with values in some $k$-algebra $A$ exists on a pre-Tannakian $\tt$.
\item Notice also that, contrary to the case of Galois categories, if $\tt$ is a neutral Tannakian category, and $\omega_1,\omega_2:\tt\to\Vect_k^{\fd}$ are two fibre functors on it valued in $k$, we do \emph{not} necessarily have a (monoidal) isomorphism $\omega_1\cong\omega_2$.
\end{itemize}
\end{remarks}
As in the case of Galois categories, we will be studying neutralized $k$-Tannakian categories collectively so and let's define $1$-morphisms.
\begin{definition}[$1$-morphisms of Tannakian categories]
Let $\tt$ and $\tt'$ be Tannakian categories over $k$. 
\begin{itemize}
\item A \emph{$1$-morphism of Tannakian categories from $\tt$ to $\tt'$} is a functor $F:\tt\to\tt'$ which is 
\begin{itemize}
\item strong monoidal,
\item $k$-linear,
\item and exact.
\end{itemize}
\item Given neutral fibre functors $\omega, \omega'$ on $\tt$ and $\tt'$ respectively, a \emph{$1$-morphism of neutralized Tannakian categories $(\tt,\omega)\to(\tt',\omega')$} is a pair $(F,\alpha)$ consisting of a morphism $F:\tt\to\tt'$ of Tannakian categories and a chosen monoidal natural isomorphism $\alpha:\omega\cong \omega'\circ F$ as follows 

\[
\begin{tikzpicture}[scale=2]
\node (S) at (0,0) {$\Vect_k^{\fd}$};
\node (C) at (-1,1) {$\tt$};
\node (C') at (1,1) {$\tt'$};

\draw[->] (C) to node (f) {} node[pos=0.5,label=left:$\scriptstyle \omega$] {} (S);
\draw[->] (C) to node (g) {} node[pos=0.5,label=above:$\scriptstyle F$] {} (C');
\draw[->] (C') to node (f') {} node[pos=0.5,label=right:$\scriptstyle \omega'$] {} (S);

\draw[shorten >=0.5cm,shorten <=0.7cm,double,double distance = 0.05cm,->] (C') -- node[above]{$\scriptstyle \alpha$} node[below]{$\scriptstyle \;\;\cong,\otimes$} (f);
\end{tikzpicture}
\]
\end{itemize}
\end{definition}
Next let's see what $2$-morphisms are.
\begin{definition}[$2$-Morphisms of neutralized Tannakian categories]
Let $(\tt,\omega)$ and $(\tt',\omega')$ be neutralized Tannakian categories over $k$. A \emph{$2$-morphism of neutralized Tannakian categories} from $(F,\alpha)$ to $(F',\alpha')$ is a natural transformation $\tau:F\Rightarrow F'$ such that we have an equality $$\alpha'\cdot \tau=\alpha$$ of monoidal natural transformations, where $\alpha'\cdot \tau$ denotes vertical composition.
\end{definition}
\begin{proposition}[The $2$-category of neutralized Tannakian categories]
Neutralized Tannakian categories over $k$, $1$-morphisms between them and $2$-morphisms between those, as defined above, assemble into a $2$-category denoted by $\ntan_{k,*}$. 
\end{proposition}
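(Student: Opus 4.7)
The plan is to unpack the definition into concrete composition operations and verify they satisfy the axioms of a $2$-category; this mirrors the verification sketched for $\gal_{*}$ but now with the monoidal/linear/exact conditions to be tracked throughout. Since the $0$-, $1$- and $2$-cells are already specified, the work consists of defining compositions and identities, checking each stays inside the class of admissible arrows, and checking the associativity, unit, and interchange axioms.

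First I would define horizontal composition of $1$-morphisms. Given $(F,\alpha)\colon(\tt,\omega)\to(\tt',\omega')$ and $(G,\beta)\colon(\tt',\omega')\to(\tt'',\omega'')$, set
\[
(G,\beta)\circ(F,\alpha):=\bigl(G\circ F,\; (\beta\underset{2}{\circ}\iota_{F})\cdot\alpha\bigr).
\]
Here one checks: (i) $G\circ F$ is $k$-linear, strong monoidal and exact, because each of these properties is closed under composition of functors; (ii) the composite $2$-cell $(\beta\underset{2}{\circ}\iota_{F})\cdot\alpha\colon\omega\Rightarrow\omega''\circ G\circ F$ is a monoidal natural isomorphism, which follows because whiskering a monoidal natural transformation by a strong monoidal functor yields a monoidal natural transformation, and vertical composites of monoidal natural isomorphisms are again monoidal natural isomorphisms. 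The identity $1$-morphism on $(\tt,\omega)$ is $(\id_{\tt},\iota_{\omega})$, which is plainly admissible.

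Next I would handle the $2$-cells. For vertical composition, if $\tau\colon(F,\alpha)\Rightarrow(F',\alpha')$ and $\tau'\colon(F',\alpha')\Rightarrow(F'',\alpha'')$, then the coherence condition is preserved since, using the Godement rules,
\[
\alpha''\cdot(\tau'\cdot\tau)=(\alpha''\cdot\tau')\cdot\tau=\alpha'\cdot\tau=\alpha,
\]
and the composite $\tau'\cdot\tau$ inherits monoidality from $\tau'$ and $\tau$. Horizontal composition (whiskering) is verified similarly: given $\tau\colon(F,\alpha)\Rightarrow(F',\alpha')$ and $\sigma\colon(G,\beta)\Rightarrow(G',\beta')$, one defines $\sigma\underset{2}{\circ}\tau$ in the ordinary $2$-categorical way and checks, using the monoidality and strong-monoidality of the involved data together with the coherence equations for $\tau$ and $\sigma$, that the composite satisfies the required equation with respect to the composite $1$-morphism constructed above.

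The remaining axioms — associativity of composition of $1$-morphisms, left and right unit laws, and the interchange law between vertical and horizontal composition of $2$-morphisms — reduce, modulo the coherence of the monoidal structures and the Godement interchange for natural transformations, to the same identities that hold in $\mathbf{Cat}$ enlarged with the monoidal data. The main obstacle — which is really just a bookkeeping issue rather than a conceptual one — is keeping track of the monoidal coherence isomorphisms of $\omega,\omega',\omega''$ when pasting the $\alpha$'s and $\beta$'s and when composing $\tau$'s with $\sigma$'s; in effect, one must verify that the pastings written as vertical/horizontal composites of monoidal natural isomorphisms are themselves monoidal, which is a routine diagram chase using the hexagon/square axioms for strong monoidal functors. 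Once this is checked, the collected data satisfy the $2$-category axioms and define $\ntan_{k,*}$.
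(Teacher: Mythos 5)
Your verification is correct: it is the standard unpacking of the $2$-category axioms (closure of the admissible $1$-cells under composition, monoidality of whiskered and vertically composed $2$-cells, and the associativity/unit/interchange laws inherited from $\mathbf{Cat}$), and you correctly isolate the only point that goes beyond pure bookkeeping, namely that the pasted natural isomorphisms remain \emph{monoidal}. The paper states this proposition without any proof, treating exactly this check as routine (as it does for $\gal_{*}$), so your proposal simply supplies the argument the paper leaves implicit rather than diverging from it.
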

\begin{remark}
As in the case of neutralized Galois categories, $\ntan_{k,*}$ becomes equivalent to a $1$-category. Since the proof of this is very similar to the one for Galois categories, I will omit it. Again, I need to stress that general Tannakian categories over $k$ can not be treated as a $1$-category.
\end{remark}
\subsection{Classification of neutralized Tannakian categories}
\subsubsection{Affine groups over a field $k$}
There are three equivalent ways to define the ``concrete'' objects corresponding to neutralized Tannakian categories over a field $k$: the ``geometric'', ``functorial'' and ``algebraic'' languages of \emph{affine group schemes over $k$}, \emph{affine groups over $k$} and \emph{commutative Hopf algebras over $k$}, respectively. Let's define these main actors of the theory right away.
\begin{itemize}
\item \textbf{Affine group $k$-schemes:} Notice that the category $\aff_k$ is cartesian, with the product of affine $k$-schemes $X=\spec A$ and $Y=\spec B$  given by the pullback
\[\begin{tikzcd}[row sep=large, column sep=large]
X\times_{\spec k} Y\arrow[d, dotted] \arrow[r, dotted]&X=\spec A\arrow[d]\\
Y=\spec B\arrow[r]&\spec k
\end{tikzcd}
\] 
where $$X\times_{\spec k} Y=\spec (A\otimes_k B).$$ An \emph{affine group scheme over $k$} or \emph{affine group $k$-scheme} is then a group object in the category of affine $k$-schemes and their category will be denoted by $$\affgrpsch_k:=\grp(\aff_k).$$
\item \textbf{Affine $k$-groups:} An \emph{affine group over $k$} or \emph{affine $k$-group} is a functor $G:\calg_k\to\grp$, which when composed with the forgetful functor $U:\grp\to\set$ becomes representable by a $k$-algebra denoted by $\mathcal{O}(G)$ and called \emph{the coordinate algebra of $G$}
\[
  \begin{tikzpicture}[scale=0.8]
  \node (A) at (0,3) {$\calg_k$};
  \node (B) at (3,3) {$\grp$};
  \node (C) at (3,0) {$\set$};
  \node (D) at (2,2) {$\cong$};
 \draw[->] (A) to node[label=above:$\scriptstyle G$]{} (B);
 \draw[->] (B) to node[label=right:$\scriptstyle U$]{} (C);
 \draw[->] (A) to node[label=left:$\scriptstyle \text{$\calg_k\left(\mathcal{O}(G),-\right)$}$]{} (C);
\end{tikzpicture}
\]
A \emph{morphism of affine $k$-groups $G\to H$} is a natural transformation $G\Rightarrow H$ and the resulting category of affine $k$-groups will be denoted by $$\affgrp_k:=\left[\calg_k,\grp\right]_{\text{rep}}$$
This definition is equivalent to that of an affine $k$-group scheme as a consequence of the behaviour of models (group objects in this case) in presheaf toposes, which is accessible upon embedding $\aff_k\simeq \calg_k^{\op}$ in copresheaves on $\calg_k$ via Yoneda and using the fact that this Yoneda embedding preserves finite products and thus group objects. 
\item \textbf{Commutative Hopf $k$-algebras:} Reversing the point of view in the last statement, i.e. by thinking of affine $k$-groups as group objects in representable copresheaves on $\calg_k$, notice that 
\begin{itemize}
\item the group object multiplication $$\text{mult}:\calg_k(\mathcal{O}(G),-)\times\calg_k(\mathcal{O}(G),-)\Rightarrow \calg_k(\mathcal{O}(G),-)$$ induces by Yoneda a morphism $$
\delta:\mathcal{O}(G)\to\mathcal{O}(G)\otimes_k\mathcal{O}(G)$$ of $k$-algebras, called the \emph{comultiplication} of $\mathcal{O}(G)$,
\item the group object unit $$\text{unit}:\calg_k(k,-)\Rightarrow\calg_k(\mathcal{O}(G),-)$$ induces by Yoneda a morphism $$
\epsilon:\mathcal{O}(G)\to k$$ of $k$-algebras, called the \emph{counit} of $\mathcal{O}(G)$, 
\item and the group object inverse $$\text{inv}:\calg_k(\mathcal{O}(G),-)\Rightarrow\calg_k(\mathcal{O}(G),-)$$ induces by Yoneda a morphism $$
i:\mathcal{O}(G)\to \mathcal{O}(G)$$ of $k$-algebras, called the \emph{antipode} of $\mathcal{O}(G)$.
\end{itemize}
The group object interaction requirements between $\text{mult},\text{unit}$ and $\text{inv}$ translate then precisely to the requirements that the $(\delta,\epsilon,i)$ need to satisfy to endow the commutative $k$-algebra $\mathcal{O}(G)$ with the structure of a cogroup object in the category $\calg_k$, equivalently a \emph{commutative Hopf $k$-algebra}. 
\end{itemize}
To sum up, we have the equivalent perspectives
\begin{eqnarray*}
\affgrpsch_k\simeq\affgrp_k\simeq\chopf_k^{\op}.
\end{eqnarray*}
on the theory.
\paragraph{Classification}
To present the classification of neutralized Tannakian categories I will mostly use the functorial language of affine groups over $k$.
\begin{definition}[Representations of affine $k$-groups]
\item A \emph{representation of an affine $k$-group $G:\calg_k\to\grp$ on a $k$-vector space $V$} is a natural transformation $\rho:G\to \aut_V$, where $$\aut_V:\calg_k\to\grp$$ is the functor associating to a commutative $k$-algebra $A$ the group of $A$-linear $\otimes$-automorphisms of the $A$-module $V\otimes_k A$. A representation $\rho$ of $G$ on a vector space $V$ is called \emph{finite-dimensional} if $V$ is finite-dimensional as a vector space over $k$. 
\end{definition}
I will denote by $$\Rep_k^{\fd}(G)$$ the category of finite-dimensional representations of an $G$ over $k$. It is a neutral Tannakian category over $k$ admitting the forgetful functor 
\[
\begin{tikzpicture}
\node (A) at (0,0) {$\Vect_k^{\fd}$};
\node (B) at (0,1.5) {$\Rep_k^{\fd}(G)$};
\draw[->] (B) to node (f) {} node[label=left:$\scriptstyle U$] {} (A);
\end{tikzpicture}
\]
to the category of finite-dimensional $k$-vector spaces as a fibre functor. Again, as in the case of Galois categories, it turns out that all neutralized Tannakian categories are of this form. 
\begin{definition}[The based fundamental affine $k$-group of a neutral Tannakian category]
Let $\tt$ be a neutral Tannakian category and let $\omega$ be a fibre functor. The \emph{(Tannakian) fundamental affine group of $\tt$ pointed at $\omega$} is the affine group $\pi_1^{\ta}(\tt,\omega)$ whose coordinate algebra is $$\mathcal{O}\left(\pi_1^{\ta}(\tt,\omega)\right)=\End^{\vee}(\omega):=\int^{T\in\ob\tt}\omega(T)\otimes \omega(T)^{\vee},$$ the \emph{commutative Hopf algebra of coendomorphisms of $\omega$} constructed as a ($k$-linear) coend; i.e. 
$$\pi_1^{\ta}(\tt,\omega)=\calg_k\left(\End^{\vee}(\omega),-\right).$$
\end{definition}
\begin{remark}
This definition is given in a perhaps uncommon way: usually the Tannakian fundamental affine group associated to a fibre functor $\omega$ is defined using ($\otimes$-) endomorphisms of fibre functors. But the two perspectives are equivalent, and I use this definition to highlight the coend nature of the coendomorphism coalgebra of $\omega$, which is an often neglected aspect of the theory.
\end{remark}
\par As in the theory of Galois categories, we have a \emph{reconstruction} and a \emph{recognition} aspect of the theory:
\begin{itemize}
\item \textbf{Reconstruction:} For any affine group $G$, we have $G\cong\pi_1^{\ta}(\Rep_k^{\fd}(G),U)$.
\item \textbf{Recognition:} A neutralized Tannakian category $(\tt,\omega)$ factorizes through a monoidal equivalence
\[\begin{tikzcd}[row sep=large, column sep=large]
\tt \arrow[r, "\simeq_{\otimes}"]\arrow[d, "\omega"']& \Rep_k^{\fd}\left(\pi_1^{\ta}(\tt,\omega)\right) \arrow[ld, "U"]\\
\Vect_k^{\fd}
\end{tikzcd}
\]
\end{itemize}
\begin{remark}
For any affine group $G$ we have an equivalence of neutral Tannakian categories $$\Rep_k^{\fd}(G)\simeq\comod_k^{\fd}(\mathcal{O}(G))$$ between the category of finite-dimensional representations of $G$ and the category of finite-dimensional comodules over the coordinate commutative Hopf $k$-algebra $\mathcal{O}(G)$. Then the Reconstruction result also says that every commutative Hopf algebra $H$ can be recovered as $$H\cong\End^{\vee}(U),$$ where $U:\comod_k^{\fd}(H)\to \Vect_k^{\fd}$ is the obvious forgetful functor behaving as a fibre functor on $\comod_k^{\fd}(H)$.
\end{remark}
The main result of the section is the following.
\begin{theorem}[Saavedra-Rivano-Deligne Characterization of neutralized Tannakian categories]\label{thmtan}
The assignment $$(\tt,\omega)\mapsto \End^{\vee}(\omega)$$ induces a ($2$-) functor $$\Aut^{\vee}: \ntan_{k,*}\to \chopf_k$$ which is an equivalence of ($2$-) categories (when $\chopf_k$ is endowed with identities as $2$-morphisms). The inverse to $\Aut^{\vee}$ is $\Comod:\chopf_k\to\ntan_{k,*}$, the ($2$-) functor that assigns to a commutative Hopf algebra $H$ over $k$ the neutralized Tannakian category $(\comod_k^{\fd}(H),U)$.
\end{theorem}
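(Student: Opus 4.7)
The plan is to verify the theorem by first constructing both functors at the level of objects and morphisms, then exhibiting an adjoint equivalence via the explicit recognition and reconstruction results announced just above the statement. On objects, $\Aut^{\vee}$ sends $(\tt,\omega)$ to $\End^{\vee}(\omega)=\int^{T}\omega(T)\otimes\omega(T)^{\vee}$, whose commutative Hopf $k$-algebra structure I would build from the data of $\omega$: the algebra multiplication from the monoidal coherence isomorphisms of $\omega$, the comultiplication and counit from the coend's universal dinatural family (essentially composition of endomorphisms), and the antipode from the rigidity of $\tt$, which lets one dualize each object inside the coend. On a $1$-morphism $(F,\alpha):(\tt,\omega)\to(\tt',\omega')$, the universal property of the coend applied to the dinatural family obtained by whiskering $\alpha$ produces a canonical map $\End^{\vee}(\omega)\to\End^{\vee}(\omega')$, functorially in $(F,\alpha)$; since $\ntan_{k,*}$ is $2$-equivalent to a $1$-category (same argument as in the Galois case), no $2$-cell data needs to be tracked.

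For $\Comod$, assigning $H\mapsto(\comod_k^{\fd}(H),U)$ is immediate once one checks the standard fact that $\comod_k^{\fd}(H)$ is neutral Tannakian (rigidity uses that $H$ has an antipode; $\End(\mathbf{1})\cong k$ uses that $k$ with its trivial comodule structure is simple); on a morphism $H\to H'$ of commutative Hopf algebras one obtains by corestriction a $k$-linear symmetric monoidal exact functor $\comod_k^{\fd}(H)\to\comod_k^{\fd}(H')$ strictly compatible with the two forgetful functors, so $\alpha$ may be taken to be the identity.

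The substantive part is to exhibit a natural $\otimes$-equivalence
\[
\tilde{\omega}:\tt\xrightarrow{\ \simeq_{\otimes}\ } \comod_k^{\fd}(\End^{\vee}(\omega))
\]
over $\Vect_k^{\fd}$, and a natural isomorphism $H\cong \End^{\vee}(U_H)$ of commutative Hopf algebras. The lift $\tilde{\omega}$ is canonical: each object $T$ acquires an $\End^{\vee}(\omega)$-comodule structure on $\omega(T)$ coming from the dinatural inclusion $\omega(T)\otimes\omega(T)^{\vee}\to\End^{\vee}(\omega)$, and functoriality/monoidality of this lift is forced by the universal property of the coend. Faithfulness of $\tilde{\omega}$ is inherited from that of $\omega$ (forced by the axioms, cf.\ the remark after the fibre-functor definition). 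Fullness and essential surjectivity are the hard part: for fullness, given a comodule map $\omega(T)\to\omega(L)$, rigidity in $\tt$ lets one rewrite Hom-sets of $\tt$ in terms of $\omega(L\otimes T^{\vee})$ and invariants under $\End^{\vee}(\omega)$, and one identifies $\Hom_{\tt}(T,L)$ with the equalizer of the two coaction maps on $\omega(L\otimes T^{\vee})$; for essential surjectivity one shows every finite-dimensional comodule $V$ is a subquotient of some $\omega(T)^n$ using that $\End^{\vee}(\omega)$ is the filtered union of the finite-dimensional subcoalgebras generated by $\omega(T)\otimes\omega(T)^{\vee}$, and then that subquotients in $\comod_k^{\fd}(H)$ of objects in the image of $\tilde{\omega}$ are themselves in the image by abelianness and rigidity of $\tt$.

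The reconstruction identity $H\cong \End^{\vee}(U_H)$ follows more directly: the universal coaction $V\to V\otimes H$ on each $V\in\comod_k^{\fd}(H)$ assembles into a dinatural family inducing a map $\End^{\vee}(U_H)\to H$, whose inverse comes from writing $H$ as the filtered colimit of coefficient coalgebras of its finite-dimensional comodules (here one uses the fundamental theorem of comodules). The main obstacle I anticipate is the fullness/essential surjectivity step, which requires the characteristic Tannakian density argument combining rigidity, exactness of $\omega$, and the structure of finite-dimensional comodules over $\End^{\vee}(\omega)$; everything else is the bookkeeping of transporting the adjoint equivalence through the fact that both ($2$-)categories in the statement are $1$-truncated, so a $1$-categorical equivalence suffices.
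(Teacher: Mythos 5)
Your sketch is a correct outline of the standard Saavedra--Rivano--Deligne argument: the Hopf structure on the coend, the comodule lift via the coprojections, the density/subquotient argument for essential surjectivity and fullness, and reconstruction via the fundamental theorem of comodules are exactly the ingredients of the proof the paper invokes. The paper itself offers no argument beyond the citation ``See \cite{deligne}'', so your proposal takes essentially the same (indeed, the only standard) approach, just spelled out.
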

\begin{proof}
See \cite{deligne}.
\end{proof}
\begin{remarks} Two remarks are due:
\begin{itemize}
\item  Firstly, perhaps the reader would expect the phrasing of Theorem $\ref{thmtan}$ to be using the assignment $$(\tt,\omega)\mapsto \pi_1^{\ta}(\tt,\omega)$$ to induce a ($2$-) functor $$\Aut: \ntan_{k,*}\to \affgrp_k^{\op}.$$ Even though this formulation is of course valid, and it would provide a more pleasing comparison with Theorem $\ref{thmgal}$, I have chosen the Hopf-algebraic formulation because this perspective will make the set-up of the comparison landscape between the two theories more apparent.    
\item Secondly, if Tannakian categories were considered without a fixed fibre functor, a fully $2$-categorical equivalence statement is needed, in which Tannakian categories are compared with affine fpqc gerbes over $\spec k$.
\end{itemize}
\end{remarks}
\section{The result}
\subsection{The ($2$-) functor $\gamma:\ntan_{k,*}\to\gal_{*}$}
The most important result in \cite{carboni}, making the connection between Tannakian and Galois categories possible, is the following.
\begin{theorem}\label{carbool}
Let $\vv$ be an essentially small, rigid, additive category with coequalizers. Then $\csep_{\comon}(\vv)$ is an essentially small Boolean pretopos. 
\end{theorem}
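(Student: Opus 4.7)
The plan is to verify directly that $\csep_{\comon}(\vv)$ satisfies the axioms of a Boolean pretopos, i.e.\ that it is a regular, effective category with finite disjoint coproducts in which every subobject is complemented. Because we have the duality $\csep_{\comon}(\vv)\simeq \csep_{\mon}(\vv)^{\op}$ proved in the preceding proposition, I would do most of the bookkeeping on the monoid side, where the constructions parallel those of classical commutative algebras (taking products, tensor products and quotients), and then dualise. A preliminary observation is that since $\vv$ is rigid, the contravariant equivalence $(-)^{\vee}$ interchanges limits and colimits, so the hypothesis that $\vv$ has coequalizers already gives it equalizers, hence all the finite limits and colimits one needs for the constructions below.

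The first block of work is to exhibit the finite limits and colimits of $\csep_{\comon}(\vv)$. The terminal object should be the tensor unit $\mathbf{1}$ with its canonical Frobenius structure; the initial object should be the zero object of $\vv$, which corresponds to the zero algebra on the monoid side. Binary products in $\csep_{\comon}(\vv)$ correspond to binary coproducts in $\csep_{\mon}(\vv)$, and these should be realised as tensor products $A\otimes B$ with the pointwise monoid/comonoid structures; binary coproducts in $\csep_{\comon}(\vv)$ correspond to binary products in $\csep_{\mon}(\vv)$, and these should be realised as biproducts $A\oplus B$ with componentwise structure, reflecting the direct-product description of classical finite separable algebras recalled in Proposition \ref{propsep}. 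Equalizers in $\csep_{\comon}(\vv)$ come from coequalizers in $\vv$, endowed with the induced structures. For each of these candidates, the crucial check is that the result is again commutative separable, which is carried out by verifying axioms (U) and (D) using the Frobenius manipulations permitted by rigidity.

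The heart of the argument is then to establish regularity, effectivity and Booleanness. For regularity and effectivity, I would construct image factorizations of morphisms in $\csep_{\comon}(\vv)$ via splittings of idempotents: in view of Remark \ref{classep} and the DeMI-separability picture, every morphism of separable monoids factors through a sub-separable monoid determined by an idempotent $e:\mathbf{1}\to A$, and one checks that kernel pairs of comonoid maps realise precisely the equivalence relations arising in this way, giving effectivity. For Booleanness, the point is that a subobject $S\hookrightarrow A$ in $\csep_{\comon}(\vv)$ corresponds to such an idempotent $e$, and the complement is the subobject associated to $\mathrm{id}_A - e$, using the additive structure of $\vv$; this is where the separability axioms are used essentially, since they guarantee that the augmentation-induced splitting produces a genuine Boolean algebra of subobjects. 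Disjointness of coproducts should fall out of the biproduct description of $\sqcup$ once the zero object is identified with the initial object.

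The main obstacle, as I see it, is not the categorical formalism (much of which is dictated by the analogy with the classical case of commutative separable $k$-algebras) but the algebraic verification that all these constructions stay inside $\csep_{\comon}(\vv)$: quotients, subobjects, products and coproducts of commutative separable monoids must themselves be commutative separable. In particular, checking axiom (D) for the monoid structure on a coequalizer, and verifying that the idempotent complement $\mathrm{id}_A - e$ carries a compatible cocommutative comultiplication making it a separable direct summand, are the technically delicate steps. This is precisely where Carboni's use of rigidity pays off: the existence of duals provides internal trace-like morphisms in $\vv$ through which one can manipulate the idempotent structure of separable monoids directly inside the ambient category, without reducing to elements.
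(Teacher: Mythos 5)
The paper does not actually prove Theorem \ref{carbool}: its ``proof'' is a citation to Theorem 1 of Section 6 of Carboni's paper, where the result is obtained not by a direct axiom check but by organizing separable monoids and bimodule-like morphisms between them into a well-supported compact closed category (a bicategory of relations in the Carboni--Walters sense), proving \emph{functional completeness} --- the splitting of coreflexive relations, which is where the (co)equalizers of $\vv$ and the additive structure actually get used --- and then invoking the general fact that the maps of such a structure form a pretopos, with Booleanness coming from the additive complement $\eta-e$. So your direct verification is a genuinely different route, and to your credit much of your inventory of the finite (co)limit structure (tensor product as product, biproduct as disjoint coproduct, idempotents $e:\mathbf{1}\to A$ governing subobjects, the additive complement) agrees with what falls out of Carboni's machinery.

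As a proof, however, the proposal has gaps at exactly the points where the theorem is hard. First, your description of equalizers is wrong as stated: the equalizer in $\csep_{\comon}(\vv)$ of $f,g:A\to B$ must be a subobject of $A$, and already in the motivating case $\csep_{\comon}(\Vect_k^{\fd})\simeq\fsep_k^{\op}$ it is the direct summand of $A$ cut out by an idempotent, whereas the coequalizer of the pair $f,g$ in $\vv$ is a quotient of $B$ of the wrong size. Producing that idempotent from rigidity, additivity and the ambient coequalizers, and showing the resulting summand is again commutative separable, is the real content of the theorem and cannot be deferred as bookkeeping. Second, effectivity is asserted (``one checks that kernel pairs \dots realise precisely the equivalence relations arising in this way''): but showing that \emph{every} equivalence relation on a separable monoid is split by an idempotent, hence is a kernel pair, is precisely Carboni's functional completeness and consumes most of his argument. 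Third, a pretopos requires regular epimorphisms and coproducts to be stable under pullback; you address disjointness but say nothing about universality. I would suggest either isolating the idempotent-splitting lemma as a standalone statement and deriving image factorizations, effectivity and stability from it, or following Carboni and obtaining regularity, effectivity and extensivity wholesale from the theory of bicategories of relations.
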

\begin{proof}
See Theorem 1 of Section 6 of \cite{carboni}.
\end{proof}
As noted in Remark $\ref{classep}$, for a field $k$, Carboni's construction associates to the pre-Tannakian category $\Vect_k^{\fd}$ the Galois category $$\csep_{\comon}(\Vect_k)=\fsep_k^{\op}=\csep_{\mon}(\Vect_k)^{\op}$$ dual to the category of finite classically separable algebras over $k$.  
\begin{remark}
From now on I assume $k$ to be separably closed. Notice that it then follows that $\Vect_k^{\fd}$ is sent to $\set_f$.
\end{remark}
It is also proved, in Theorem 2 of Section 6 of \cite{carboni}, that Carboni's construction associates, furthermore, to any fibre functor $$\omega:\tt\to\Vect_k^{\fd}$$ on a neutral Tannakian category $\tt$ over $k$, a fibre functor $$\csep_{\comon}(\omega):\csep_{\comon}(\tt)\to \set_f$$ on the Galois category $\csep_{\comon}(\tt)$. It in fact follows that Carboni's construction interacts well with the definitions of the ($2$-) categories of neutralized Tannakian and Galois categories given earlier in this article, inducing a ($2$-) functor 
\[
\begin{tikzpicture}
\node (A) at (0,0) {$\gal_{*}$};
\node (B) at (0,1.5) {$\ntan_{k,*}$};
\draw[->] (B) to node (f) {} node[label=left:$\scriptstyle \gamma$] {} (A);
\end{tikzpicture}
\]
from the ($2$-) category $\ntan_{k,*}$ of neutralized Tannakian categories over $k$ to the ($2$-) category $\gal_{*}$ of neutralized Galois categories. 
\begin{remark}
The details of the ($2$-) functoriality of $\gamma$ can be checked (easily but tediously) using Carboni's technology of \emph{wscc structures\footnote{These structures are called \emph{hypergraph} categories in modern literature.}}, which one can slightly adjust to get not only Galoisian fibre functors but also more general morphisms of Boolean pretoposes out of Tannakian morphisms in our context. However, for the reader who is more familiar with the language of PROPs or of Frobenius monoids, I would like to mention the following two equivalent ways to construct the category $\csep_{\comon}(\tt)$ that can be used to check the ($2$-) functoriality of $\gamma$:
\begin{itemize}
\item Using the perspective of PROPs and the language of \cite{lack}, the category $\csep_{\comon} (\tt)$ can be identified with 
the category $$(\ff^{\op}\otimes_{\mathbb{P}}\ff)(\tt)_{\ff^{\op}}$$ defined using the (eso, fully faithful) factorization of the forgetful functor $U$
\[
\begin{tikzpicture}
\node (B) at (0,0) {$(\ff^{\op}\otimes_{\mathbb{P}}\ff)(\tt)_{\ff^{\op}}$};
\node (A) at (-2,1) {$(\ff^{\op}\otimes_{\mathbb{P}}\ff)(\tt)$};
\node (C) at (2,1) {$\ff^{\op}(\tt)$};

\draw[->] (A) to node (f) {} node[label=left:$\scriptstyle $] {} (B);
\draw[right hook->] (B) to node (g) {} node[label=left:$\scriptstyle $] {} (C);
\draw[->] (A) to node (h) {} node[label=above:$\scriptstyle U$] {} (C);
\end{tikzpicture}
\]
\item Using the perspective of Frobenius monoids, the reader can identify  the category $\csep_{\comon}(\tt)$ with the category $$\mathbf{SCFrob}(\mathcal{I},\tt)_{\comon}$$ of special commutative Frobenius monoidal functors from the singleton symmetric monoidal category $\mathcal{I}$ to $\tt$ and comonoidal natural transformations between them (not both comonoidal \emph{and} monoidal natural transformations, as it is customary).
\end{itemize}
\end{remark}
Next let me remind the reader of the following definitions. 
\begin{definition}[Algebraic and \'{e}tale groups over $k$]
Let $k$ be any field.
\begin{itemize}
\item An affine $k$-group $G:\calg_k\to\grp$ over $k$ is said to be \emph{algebraic} if its coordinate algebra $\mathcal{O}(G)$ is a finitely generated $k$-algebra. I will denote by $\alggrp_k$ the category of algebraic groups over $k$.
\item An algebraic $k$-group $G:\calg_k\to\grp$ over $k$ is said to be \emph{(finite) \'{e}tale} if its coordinate algebra $\mathcal{O}(G)$ is a finite commutative separable $k$-algebra. I will denote by $\etgrp_k$ the category of \'{e}tale groups over $k$.
\end{itemize}
\end{definition}
We have identifications 
\begin{eqnarray}
\alggrp_k\simeq \grp(\calg_{k,\fg}^{\op})\simeq \chopf_{k,\fg}^{\op}
\end{eqnarray}
and 
\begin{eqnarray}\label{quotet}
\etgrp_k\simeq \grp(\fsep_k^{\op}).
\end{eqnarray}
Next recall (say from Section 6.5 of \cite{waterhouse}) that any finitely generated $k$-algebra $A$ admits a maximal separable subalgebra $$\pi_0(A).$$ 
\begin{remark}[The separable $k$-cogroup structure on $\pi_0(A)$] 
When $A$ is a finitely generated commutative Hopf $k$-algebra, i.e. a finitely generated $k$-algebra endowed with a cogroup structure, then $\pi_0(A)$ becomes a \emph{finite separable $k$-cogroup}, i.e. a cogroup object in the category $\fsep_k$ because the $\pi_0$ construction preserves coproducts, i.e. $\pi_0(A\otimes_k B)\cong \pi_0(A)\otimes_k\pi_0(B)$, for all finitely generated $k$-algebras $A$ and $B$.
\end{remark}
For any algebraic $k$-group $G$, the inclusion $$\pi_0(\mathcal{O}(G))\to \mathcal{O}(G)$$ makes $\mathcal{O}(G)$ faithfully flat over $\pi_0(\mathcal{O}(G))$ and thus the corresponding $$q_{G}:G\to G_{\et}:=\calg_k(\pi_0(\mathcal{O}(G)),-)$$ is a quotient map.
\begin{definition}
Let $G$ be an affine algebraic group over $k$. The \emph{group of connected components of $G$} is the (quotient) finite \'{e}tale algebraic group $G_{\et}$ corresponding to the finitely generated commutative Hopf subalgebra $\pi_0(\mathcal{O}(G))$ of $\mathcal{O}(G)$ under $(\ref{quotet})$. 
\end{definition}
\begin{remarks}[Connected components of algebraic $k$-groups and separable $k$-algebras]
The following two remarks shed more light on how the connected components of an algebraic $k$-group $G$ can be described using separable $k$-algebras.
\begin{itemize}
\item \textbf{The \'{e}tale quotient $G_{\et}$ and connectedness of $\spec(\mathcal{O}(G))$:} It turns out the topological space $\spec(\mathcal{O}(G))$ of the underlying affine (group) $k$-scheme of $G$ is connected if and only if $G_{\et}$ is trivial, i.e. represented by $k$. In this case $G$ is said to be a \emph{connected} algebraic $k$-group.
\item \textbf{The connected-\'{e}tale sequence of an algebraic $k$-group $G$:} The kernel $$G^0:=\ker(q_{G})$$ is a normal affine subgroup of $G$ called the \emph{connected component} or \emph{identity component} of $G$ and it is always a connected algebraic $k$-group. Thus every algebraic $k$-group participates in a short exact sequence
\[
\begin{tikzpicture}
\node (A) at (-0.8,0) {$0$};
\node (B) at (1.5,0) {$G^0=\ker q_G$};
\node (C) at (3.8,0) {$G$};
\node (D) at (5.8,0) {$\pi_0(G)$};
\node (E) at (7.6,0) {$0$};

\draw[->] (A) to (B);
\draw[->] (B) to (C);
\draw[->] (C) to (D);
\draw[->] (C) to node (g) {} node[label=above:$\scriptstyle q_G$] {} (D);

\draw[->] (D) to (E);
\end{tikzpicture}
\] 
where $\pi_0(G)$ is \'{e}tale and $G^0$ is connected, called the \emph{connected-\'{e}tale sequence of $G$}.
\end{itemize}
\end{remarks}
This construction of the \'{e}tale quotient is functorial, inducing a functor $$(-)_{\et}:\alggrp_k\to\etgrp_{k}$$ whose dual I will denote by 
$$\pi_0:\chopf_{k,\fg}\to\mathbf{Cogrp}(\fsep_k)\cong\grp(\fsep_k^{\op})^{\op}.$$
\par Now going back to assuming $k$ to be separably closed, the functor $\pi_0$ induces a functor 
\[
\begin{tikzpicture}
\node (A) at (0,0) {$\grp_f^{\op}$};
\node (B) at (0,1.5) {$\chopf_{k,\fg}$};
\draw[->] (B) to node (f) {} node[label=left:$\scriptstyle \text{$\pi_0(-)_k:=$}$, label=right:$\scriptstyle \text{$\calg_k(\pi_0(-),k)$}$] {} (A);
\end{tikzpicture}
\]
landing in the dual of the category of finite groups. I am going to be interested in the $\ind$-completion of this functor. Notice that $$\ind\grp_f^{\op}\simeq \pro(\grp_f)^{\op}\simeq\stonegrp^{\op}.$$ The $\ind$-completion of $\chopf_{k,\fg}$ is the full category of commutative Hopf algebras over $k$; this is certainly well-known but as I have not found a reference for this in the literature let me sketch the proof. So firstly notice that $\calg_k$ is a locally finitely presentable category, whose essentially small full subcategory consisting of finitely presentable objects identifies with the category $\calg_{k,\fg}$ of finitely generated commutative $k$-algebras; it is then a consequence of Gabriel-Ulmer duality that $\calg_{k,\fg}$ has finite colimits and that the embedding 
\[\begin{tikzcd}[row sep=large, column sep=large]
\calg_k\\
\calg_{k,\fg}\arrow[u, hook, "\iota"]
\end{tikzcd}
\]
preserves them, as it realizes $\calg_k$ as $\calg_k\simeq \ind\calg_{k,\fg}$, the free cocompletion of $\calg_{k,\fg}$ under filtered colimits. Hence the fully faithful $$\iota^{\op}:\calg_{k,\fg}^{\op}\to \calg_k^{\op}$$ preserves finite limits and therefore group objects can be defined to give a functor 
\begin{eqnarray*}
\grp(\iota^{\op}):\grp(\calg_{k,\fg}^{\op})&\to& \grp(\calg_k^{\op})\\
G:\mathbb{L}_{\grp}\to \calg_{k,\fg}^{\op}&\mapsto& \iota^{\op}\circ G,
\end{eqnarray*}
where $\mathbb{L}_{\grp}$ denotes the Lawvere theory of groups. Next, notice that the functor $\grp(\iota^{\op})$ is fully faithful, as we clearly have
\begin{eqnarray*}
\grp(\calg_k^{\op})\left(\iota^{\op}\circ G, \iota^{\op}\circ H\right)&\cong&\int_{\ell\in \mathbb{L}_{\grp}}\calg_k^{\op}\left(\iota^{\op}(G(\ell^{\dagger})), \iota^{\op}(H(\ell^{\ddagger}))\right)\\
&\cong& \int_{\ell\in\mathbb{L}_{\grp}}\calg_{k,\fg}^{\op}(G(\ell^{\dagger}),H(\ell^{\ddagger}))\\
&\cong& \grp(\calg_{k,\fg}^{\op})(G,H),
\end{eqnarray*}
where the second isomorphism follows from the fact that $\iota^{\op}$ is fully faithful. The dual of this functor is the fully faithful functor
\[\begin{tikzcd}[row sep=large, column sep=large]
\chopf_k\\
\chopf_{k,\fg}\arrow[u, hook, "\grp(\iota^{\op})^{\op}"]
\end{tikzcd}
\]
\begin{proposition}
The functor $\grp(\iota^{\op})^{\op}$ realizes $\chopf_k$ as the $\ind$-completion of $\calg_{k,\fg}$. 
\end{proposition}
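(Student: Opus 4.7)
The plan is to verify the two defining conditions of an $\ind$-completion against the fully faithful functor $\grp(\iota^{\op})^{\op}:\chopf_{k,\fg}\to \chopf_k$ (full faithfulness is already established just above the statement): (i) every object of $\chopf_k$ is a filtered colimit of objects in the essential image of $\chopf_{k,\fg}$, and (ii) every object of $\chopf_{k,\fg}$ is finitely presentable in $\chopf_k$. Given (i) and (ii), the universal property of $\ind\chopf_{k,\fg}$ produces a functor $\ind\chopf_{k,\fg}\to\chopf_k$ which is essentially surjective by (i) and fully faithful by (ii) and the full faithfulness of $\grp(\iota^{\op})^{\op}$.

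First, I would note that filtered colimits in $\chopf_k$ exist and are created by the forgetful functor $\chopf_k\to\calg_k$. Indeed, comultiplication, counit and antipode are all finitary structures and their axioms are finite limit conditions, so they pass unambiguously through filtered colimits of underlying $k$-algebras.

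The substantive step is (i): every commutative Hopf $k$-algebra $H$ is the directed union, inside $\chopf_k$, of its finitely generated sub-Hopf-algebras. For any finite $S\subset H$, I would build such a sub-Hopf-algebra by a standard closing-up argument. Starting from $k[S]\subset H$ and writing $\delta(s)=\sum_i s_i'\otimes s_i''$ for each $s\in S$, only finitely many new elements $s_i',s_i''$ are introduced; adjoining them and their $\delta$-images, together with the (finitary) images under the antipode and counit, stabilizes after finitely many iterations to give a sub-Hopf-algebra finitely generated as a $k$-algebra and containing $S$. The family of such sub-Hopf-algebras is filtered (finite unions again generate finitely generated sub-Hopf-algebras by the same argument), and $H$ is manifestly their colimit in $\chopf_k$ by the first paragraph. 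This is the classical fact underlying $\affgrpsch_k\simeq\pro\alggrpsch_k$; see e.g.\ \cite{waterhouse}, \S3.3.

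For (ii), I would argue as follows. Because $k$ is Noetherian, $\calg_{k,\fg}$ coincides with the full subcategory of finitely presentable objects of $\calg_k$, and in particular any $A\in\chopf_{k,\fg}$ is finitely presentable in $\calg_k$. A Hopf-algebra morphism $A\to H$ is an algebra morphism satisfying finitely many equational conditions that involve $\delta,\epsilon,i$; formally, $\chopf_k(A,-)$ is the equalizer of a pair of natural transformations between finite limits of $\calg_k$-hom functors built from $A$. Since $A$ is finitely presentable in $\calg_k$, filtered colimits in $\chopf_k$ are computed in $\calg_k$, and filtered colimits commute with finite limits in $\set$, we conclude that $\chopf_k(A,-)$ preserves filtered colimits, i.e.\ $A$ is finitely presentable in $\chopf_k$.

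The main obstacle in all of this is the ``closing-up'' argument of step (i): one has to check that iterating $\delta$ on a finite set really does terminate in a finitely generated sub-Hopf-algebra, which requires that each $\delta(s)$ lives in a finite-dimensional subspace of $H\otimes H$ and that the process is stable after one further iteration. Once this is in hand, the remaining ingredients are formal consequences of local finite presentability of $\calg_k$ together with the fact that the Hopf structure is finitary.
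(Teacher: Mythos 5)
Your proof is correct and follows the same route as the paper: the substantive input in both cases is the fact from Waterhouse \S 3.3 that every commutative Hopf $k$-algebra is the filtered union of its finitely generated Hopf subalgebras, with filtered colimits of Hopf algebras created by the forgetful functor to $\calg_k$. You are in fact more careful than the paper, whose proof declares density under filtered colimits to be ``the only requirement'' remaining and never verifies that the objects of $\chopf_{k,\fg}$ are finitely presentable in $\chopf_k$; your step (ii), reducing this to finite presentability in $\calg_k$ together with the commutation of finite limits with filtered colimits in $\set$, supplies exactly that missing check.
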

\begin{proof}
We know already that the functor is fully faithful, and since $\calg_k$ is cocomplete, the only requirement that needs to be satisfied is that the closure of $\chopf_{k,\fg}$ in $\chopf_k$ under filtered colimits is $\chopf_k$ itself. But this follows from the fact that a commutative Hopf algebra $H$ can be written as a filtered colimit (often also called ``filtered union'' because the transition maps of the colimit are injective) $$H\cong\varinjlim_{i}H_{i}$$ 
of its finitely generated Hopf subalgebras (see section 3.3 of \cite{waterhouse}).
\end{proof}
Thus the $\ind$-completion of the functor $\pi_0$ is a functor
\[
\begin{tikzpicture}
\node (A) at (0,0) {$\stonegrp^{\op}.$};
\node (B) at (0,1.5) {$\chopf_{k}$};
\draw[->] (B) to node (f) {} node[label=left:$\scriptstyle \ind \pi_0(-)_k$] {} (A);
\end{tikzpicture}
\]

Now given a neutralized Tannakian category $(\tt,\omega)$, and writing the coendomorphism  commutative Hopf algebra $$\mathcal{O}\left(\pi_1^{\ta}(\tt,\omega)\right)\cong\varinjlim_i \mathcal{O}\left(\pi^{\ta}_1(\tt,\omega)\right)_i$$ as the filtered colimit of its finitely generated Hopf subalgebras $\mathcal{O}\left(\pi^{\ta}_1(\tt,\omega)\right)_i$, we can conclude, using Carboni's theory, that the fundamental group of the Galois fibre functor constructed from $(\tt,\omega)$ using commutative separable monoids, can be realized
\begin{eqnarray}
\pi_1^{\ga}\left(\csep(\tt)_{\comon},\csep(\omega)_{\comon}\right)\cong \varprojlim_{i}\calg_k\left(\pi_0\left(\mathcal{O}(\pi_1^{\ta}(\tt,\omega)_i,k\right) \right)
\end{eqnarray}
as the cofiltered limit of the finite groups arising as the $k$-points of the maximal separable subalgebras of each $\mathcal{O}\left(\pi^{\ta}_1(\tt,\omega)\right)_i$, inducing an identification 
\[
\begin{tikzpicture}
\node (A) at (-2,1) {$\ntan_{k,*}$};
\node (B) at (2,1) {$\chopf_{k}$};
\node (C) at (-2,-1) {$\gal_{*}$};
\node (D) at (2,-1) {$\stonegrp^{\op}$};
\node (E) at (0,0) {$\cong$};

\draw[->] (A) to node (f) {} node[label=left:$\scriptstyle \gamma$] {} (C);
\draw[->] (A) to node (f) {} node[label=below:$\scriptstyle \Aut^{\vee}$, label=above:$\scriptstyle \simeq$] {} (B);
\draw[->] (B) to node (f) {} node[label=right:$\scriptstyle \ind\pi_0(-)_k$] {} (D);
\draw[->] (D) to node (f) {} node[label=above:$\scriptstyle \Cont$, label=below:$\scriptstyle \simeq$] {} (C);
\end{tikzpicture}
\]
of $\gamma$ with the composite $\Cont\circ\ind\pi_0(-)_k\circ\Aut^{\vee}$.
\subsection{The fully faithful left ($2$-) adjoint to $\gamma$}
\par Let $G$ be a finite group and let $k$ be a general field. Recall that the \emph{constant algebraic group} $G_k$ has as underlying scheme the disjoint union of copies of $\spec(k)$ indexed by the elements of $G$, i.e. $$|G_k|=\bigsqcup_{g\in G}\spec(k);$$ 
its coordinate algebra $\mathcal{O}(G_k)$ is just the set of functions $\set(G,k)$ with is usual algebra structure and thus as an affine group over $k$, $G_k$ is the functor
\begin{eqnarray*}
G_k:\calg_k&\to&\grp\\
A&\mapsto& \calg_k(\set(G,k),A).
\end{eqnarray*}
\begin{remarks}
The reader might want to notice the following:
\begin{itemize}
\item \textbf{On the ``constant'' terminology:} We have an isomorphism $$\set(G,k)=\prod_{\gamma\in G}k=:k^G$$ as $k$-algebras (hence $\set(G,k)$ is a finite separable $k$-algebra), and for every $k$-algebra $A$ whose $\spec A$ is connected, a $k$-algebra morphism from $k^G$ to $A$ is zero on exactly one copy of $k$, hence $$G_k(A)=\calg_{k}(\set(G,k),A)=G$$ i.e. the $A$-points of $\set(G,k)$ correspond precisely to elements of $G$. This explains the terminology ``constant'' algebraic group. 
\item \textbf{The Hopf algebra structure on $k^G$:} The functor $G_k$ is indeed an algebraic $k$-group: the commutative Hopf $k$-algebra structure on $k^G$ is the following:
\begin{itemize}
\item the comultiplication is given by $\delta(\phi)(g\otimes h)=\phi(gh)$, 
\item the antipode is given by $i(\phi)(g)=\phi(g^{-1})$,
\item and the counit is given by $\epsilon(\phi)=\phi(e)$,
\end{itemize}
for all $\phi\in \set(G,k)$, $g$ and $h$ in $G$ and $e\in G$ the identity. 
\item \textbf{Comparison of $k^G$ with the group algebra \text{$k[G]$}:} The reader might want to notice that the \emph{commutative} Hopf algebra $k^{G}$ constructed above from a finite group $G$ is \emph{not} the group algebra $k[G]$ (which is also a Hopf algebra over $k$ and which is \emph{cocommutative} by construction), but rather it can be identified with its dual.
\end{itemize}
\end{remarks}
\par For the rest of the section assume $k$ to be separably closed. The assignment $G\mapsto G_k$
induces a functor $$(-)_k:\grp_f\to\alggrp_k$$ which is fully faithful, corresponding to the full embedding of \'{e}tale groups in algebraic groups. I will denote its dual functor by
\[
\begin{tikzpicture}
\node (A) at (0,0) {$\grp_f^{\op}$};
\node (B) at (0,1.5) {$\chopf_{k,\fg}$};
\draw[right hook->] (A) to node (f) {} node[label=left:$\scriptstyle k^{(-)}$] {} (B);
\end{tikzpicture}
\]
It is easy to see that $k^{(-)}$ is left adjoint to $\pi_0(-)_k$, i.e. we have a coreflection
\[\begin{tikzcd}[row sep=large]
\chopf_{k,\fg}\arrow[d,shift left=1.4ex, "\pi_0(-)_k"]\\
\grp_f^{\op}\arrow[u,hook,shift left=1.4ex, "k^{(
-)}", "\dashv"']
\end{tikzcd}
\]
of $\grp_f^{\op}$ in $\chopf_{k,\fg}$; indeed, we have, naturally in $\Gamma\in \grp$ and $H\in\chopf_{k,\fg}$, the isomorphisms
\begin{eqnarray*}
\grp_f^{\op}\left(\Gamma,\pi_0(H)_k\right)&\cong& \chopf_{k,\fg}\left(k^{\Gamma},k^{\pi_0(H)_k}\right)\\
&\cong&\chopf_{k,\fg}\left(k^{\Gamma},\pi_0(H)\right)\\
&\cong&\chopf_{k,\fg}\left(k^{\Gamma},H\right),
\end{eqnarray*} 
with the second isomorphism following from the fact that for any algebraic $k$-group $G$, $G_{\et}$ is constant as we work with $k$ separably closed, and the last isomorphism following from the fact that $k^{G}$ is separable and algebra homomorphisms from a separable algebra to $H$ factor uniquely through the inclusion $\pi_0(H)\subseteq H$. 
\par Now since both $\chopf_{k,\fg}$ and $\grp_f^{\op}$ have finite colimits, the coreflection $\ind$-completes to produce a coreflection 
\[\begin{tikzcd}[row sep=large]
\chopf_{k}\arrow[d,shift left=1.4ex, "\ind\pi_0(-)_k"]\\
\stonegrp^{\op}\arrow[u,hook,shift left=1.4ex, "\ind k^{(
-)}", "\dashv"']
\end{tikzcd}
\]
\begin{remark}[Relation with Pierce spectrum]
Let me give arrive now to the coreflection $\ind k^{(-)}\dashv \ind\pi_0(-)_k$ from a perspective which might be a bit more familiar to the category theorists, and which highlights the algebraic connection of connected components with idempotents. Recall the following two constructions:
\begin{itemize}
\item The theory of Stone duality provides us with an equivalence $$\spec_{\stone}:\bool\to \stone^{\op}$$ between the category of Boolean algebras and the dual of the category of Stone spaces, which associates to a Boolean algebra $B$ the Stone space of its ultrafilters with the Stone topology, $$\spec_{\stone}:B\mapsto\ult(B).$$ I will call this functor the \emph{Stone spectrum} functor. The inverse to $\spec_{\stone}$ is the functor $$\clopen:\stone^{\op}\to\bool,$$ sending a Stone space to its Boolean algebra of clopens. 
\item Next recall that the idempotents of a commutative ring $R$ can be endowed with the structure of a Boolean algebra, under the operations
\begin{eqnarray*}
x\land y&:=&xy\\
x\lor y&:=& x+y-xy,
\end{eqnarray*}
and this construction extends to a functor $$\idemp:\cring\to\bool$$ from the category of commutative rings to the category of Boolean algebras. 
\end{itemize}
After slicing appropriately to pass from rings to algebras, and combining the two functors above, we get, for any commutative ring $R$, a composite functor  $$\spec_{\pierce}:=\spec_{\stone}\circ\idemp^{\op}:\calg_R^{\op}\to\stone,$$
which is called the \emph{Pierce spectrum} functor. 
\par When $R$ is a field $R=k$, this functor admits a fully faithful left adjoint 
\[\begin{tikzcd}[row sep=large]
\calg_k\arrow[d,shift left=1.4ex, "\spec_{\pierce}"]\\
\stone^{\op}\arrow[u,hook,shift left=1.4ex, "\text{$\cont(-,k_{\disc})$}", "\dashv"']
\end{tikzcd}
\]
where the functor
\begin{eqnarray*}
\cont(-,k_{\disc}):\stone^{\op}&\to& \calg_k\\
X&\mapsto& \cont(X,k_{\disc})
\end{eqnarray*}
assigns to a Stone topological space $X$ the $k$-algebra of continuous functions from $X$ to $k$ endowed with the discrete topology. 
\par The Pierce spectrum functor can be checked to preserve finite coproducts, and thus we can take cogroup objects in the adjunction $\cont(-,k_{\disc})\dashv\spec_{\stone}$ to arrive to a coreflection
\[\begin{tikzcd}[row sep=large]
\cogrp(\calg_k)\arrow[d,shift left=1.4ex, "\cogrp(\spec_{\pierce})"]\\
\cogrp(\stone^{\op})\arrow[u,hook,shift left=1.4ex, "\text{$\cogrp(\cont(-,k_{\disc}))$}", "\dashv"']
\end{tikzcd}
\] 
which identifies with the coreflection $\ind k^{(-)}\dashv \ind\pi_0(-)_k$.
\end{remark}
\par We can thus define the fully faithful ($2$-) adjoint of $\gamma$ 
\[
\begin{tikzpicture}
\node (A) at (-2,1) {$\ntan_{k,*}$};
\node (B) at (2,1) {$\chopf_{k}$};
\node (C) at (-2,-1) {$\gal_{*}$};
\node (D) at (2,-1) {$\stonegrp^{\op}$};
\node (E) at (0,0) {$:=$};

\draw[right hook->] (C) to node (f) {} node[label=left:$\scriptstyle \text{$(-)_{\lin}$}$] {} (A);
\draw[->] (B) to node (f) {} node[label=below:$\scriptstyle \Comod$, label=above:$\scriptstyle \simeq$] {} (A);
\draw[right hook->] (D) to node (f) {} node[label=right:$\scriptstyle \text{$\ind k^{(-)}$}$] {} (B);
\draw[->] (C) to node (f) {} node[label=above:$\scriptstyle \Aut$, label=below:$\scriptstyle \simeq$] {} (D);
\end{tikzpicture}
\]
as the composite $(-)_{\lin}:=\Comod\circ\ind k^{(-)}\circ\Aut$, inducing a $2$-coreflection 
\[\begin{tikzcd}[row sep=large]
\ntan_{k,*}\arrow[d,shift left=1ex, "\gamma"]\\
\gal_{*}\arrow[u,hook,shift left=1.4ex, "\dashv"', "(-)_{\lin}"]
\end{tikzcd}
\]
\begin{remark}
I do not have a direct categorical description of $(-)_{\lin}$ - it must behave like a closed form of the ``(Artin) motivization'' procedure used to construct motives from varieties. 
\end{remark}

\newpage

\end{document}